\documentclass[a4paper,12pt,reqno]{amsart}
\usepackage{amsfonts}
\usepackage{amsmath}
\usepackage{amssymb}
\usepackage[a4paper]{geometry}
\usepackage{mathrsfs}
\usepackage{csquotes}
\usepackage{enumitem}
\usepackage{dirtytalk}
\usepackage{comment}
\usepackage{nccmath}
\usepackage{soul}
\usepackage{breakcites}

\usepackage{tikz}
\usetikzlibrary{calc, arrows.meta, angles, quotes}

\usepackage[colorlinks]{hyperref}
\renewcommand\eqref[1]{(\ref{#1})} 
\numberwithin{equation}{section}
\theoremstyle{plain}
\newtheorem{thm}{Theorem}[section]

\newtheorem{cor}[thm]{Corollary}
\newtheorem{lem}[thm]{Lemma}

\theoremstyle{definition}
\newtheorem{defn}[thm]{Definition}
\newtheorem{rem}[thm]{Remark}

\newcommand{\N}{\mathbb{N}}
\newcommand{\C}{\mathbb{C}}
\newcommand{\R}{\mathbb{R}}
\newcommand{\re}{\operatorname{Re}}

\begin{document}

   \title[General weighted discrete $p$-Hardy inequalities]
 {Sharp forms and quantitative stability for general weighted discrete $p$-Hardy inequalities}

\author[N. Yessirkegenov]{Nurgissa Yessirkegenov}
\address{
  Nurgissa Yessirkegenov:
  \endgraf
  KIMEP University, Almaty, Kazakhstan
     \endgraf
  {\it E-mail address} {\rm nurgissa.yessirkegenov@gmail.com}
  }

  \author[A. Zhangirbayev]{Amir Zhangirbayev}
\address{
  Amir Zhangirbayev:
 \endgraf
   SDU University, Kaskelen, Kazakhstan
  \endgraf
  and 
  \endgraf
  Institute of Mathematics and Mathematical Modeling, Kazakhstan
   \endgraf
  {\it E-mail address} {\rm
amir.zhangirbayev@gmail.com}
  }

\thanks{This research is funded by the Committee of Science of the Ministry of Science and Higher Education of the Republic of Kazakhstan (Grant No. AP23490970).}

     \keywords{discrete Hardy inequality, supersolution method, sharp remainders}
     \subjclass[2020]{39B62, 26D15}

     \begin{abstract}
     In this paper, we provide a sharp remainder term for the general weighted discrete $p$-Hardy inequality. By choosing appropriate weights and specifying $1<p<\infty$, we are able to recover the identity by Krej{\v{c}}i{\v{r}}{\'\i}k-\v{S}tampach \cite[Theorem 1]{krejcirik2022sharp}, obtain the sharp form of the $p$-Hardy inequality by Fischer-Keller-Pogorzelski \cite[Theorem 1]{fischer2023improved} and generalize the power weighted inequality by Gupta \cite[Theorem 2.1]{gupta2022discrete} with a sharp remainder. In addition, we prove a quantitative stability type result, thereby showing that the deficit of the discrete $p$-Hardy inequality controls the weighted distance to the family of non-trivial minimizers. 
     \end{abstract}
     \maketitle

\section{Introduction}\label{sec intro}

More than a hundred years ago, G. H. Hardy received a letter \cite{landau1921letter} from E. Landau dated 21 June 1921, containing a proof of the following result: let $1<p<\infty$, then for all non-negative real-valued sequences $\{a_n\}_{n=1}^{\infty}$, we have
\begin{align}\label{p-discrete hardy}
\sum_{n=1}^{\infty} a_n^p \geq \left(\frac{p-1}{p}\right)^p \sum_{n=1}^{\infty} \left(\frac{a_1 + a_2 + \ldots + a_n}{n}\right)^p,
\end{align}
where the constant $\left(\frac{p-1}{p}\right)^p$ is sharp and equality holds if and only if $a_{n}=0$ for all $n$. Nowadays a more modern and equivalent form of (\ref{p-discrete hardy}) is used in terms of compactly supported sequences: let $1<p<\infty$ and $\mathbb{N}_0:=\{0, 1, 2, 3, \ldots\}$, then for all $u\in C_{c}(\mathbb{N}_0)$, we have
\begin{align}\label{p-hardy dis 2}
\sum_{n=1}^{\infty} |u(n) - u(n-1)|^p \geq \left(\frac{p-1}{p}\right)^p \sum_{n=1}^{\infty} \frac{|u(n)|^p}{|n|^p}
\end{align}
with $u(0)=0$. The inequality (\ref{p-discrete hardy}) has a very rich history behind it and we refer to \cite{kufner2006prehistory, kufner2007hardy, persson2024hardy} for excellent surveys on the subject. Now, for context, it is also important to mention a continuous version of (\ref{p-discrete hardy}) that was developed shortly after (\ref{p-discrete hardy}):
\begin{align}\label{p-hardy continuous}
  \int_{0}^{\infty} |\varphi'(x)|^{p} \, \mathrm{d}x
  \geq \left(\frac{p-1}{p}\right)^{p}
  \int_{0}^{\infty} \frac{|\varphi(x)|^{p}}{x^{p}} \, \mathrm{d}x
\end{align}
for smooth compactly supported functions $\varphi \in C_{0}^{\infty}(\mathbb{R}_{+})$ in $\mathbb{R}_{+} \equiv (0, \infty)$. Interestingly enough, the inequality (\ref{p-hardy continuous}) has had a much more extensive development than (\ref{p-hardy dis 2}) (see e.g., \cite{mazya2013sobolev, balinsky2015analysis, ruzhansky2019hardy}). The reason for this is partially due to the breakdown of calculus in the discrete setting, which makes the transfer of proofs from continuous to discrete much harder. For example, in the continuous framework, due to polar coordinates, the multidimensional version of (\ref{p-hardy continuous}) can be obtained almost immediately. However, there is no real substitute for polar coordinates in the discrete setting. Although the explicit form of the optimal constant in any dimension is still open, a lot has been done already (see \cite{rozenblum2009spectral, rozenblum2014spectral, kapitanski2016continuous, keller2018optimal, gupta2023hardy, gupta2024discrete}).

Apart from the lack of suitable techniques, the discrete Hardy inequality (\ref{p-discrete hardy}) also exhibits some interesting phenomena that completely differ from its continuous counterpart. In particular, in the continuum, it is well-known that both the constant $\left(\frac{p-1}{p}\right)^{p}$ and the weight $\left(\frac{p-1}{p}\right)^{p}\frac{1}{|x|^{p}}$ are sharp and cannot be improved further. One might expect the discrete analog (\ref{p-hardy dis 2}) to have similar behavior. While the constant in (\ref{p-hardy dis 2}) is sharp, the weight sequence $\left(\frac{p-1}{p}\right)^{p}\frac{1}{|n|^p}$ is not. In fact, in \cite{keller2018improved}, Keller, Pinchover and Pogorzelski proved that for $p=2$, the classical weight sequence $w^{H}_{2}:=\frac{1}{4n^{2}}$ can be replaced by the following
\begin{align*}
w_{2}(n) = 2 - \left(\left(1 + \frac{1}{n}\right)^{1/2} + \left(1 - \frac{1}{n}\right)^{1/2}\right)>w^{H}_{2},
\end{align*}
which is strictly larger than the classical one. In \cite{keller2018optimal}, the same authors showed that the new weight $w(n)$ is indeed sharp by showing criticality, null-criticality and optimality near infinity. Later, the same was done for any $1<p<\infty$ by Fischer, Keller and Pogorzelski \cite{fischer2023improved}: let $p > 1$. Then, for all $u \in C_c(\mathbb{N}_{0})$ with $u(0) = 0$, we have
\begin{align}\label{FKP}
  \sum_{n=1}^{\infty} |u(n) - u(n-1)|^{p}
  \geq \sum_{n=1}^{\infty} w_p(n) \, |u(n)|^{p},
\end{align}
where $w_p$ is a strictly positive function given by
\begin{align*}
  w_p(n) = \left(1 - \left(1 - \frac{1}{n}\right)^{\frac{p-1}{p}}\right)^{p-1}
  - \left(\left(1 + \frac{1}{n}\right)^{\frac{p-1}{p}} - 1\right)^{p-1}.
\end{align*}
Furthermore, $w_p$ is optimal and we have for all $n \in \mathbb{N}$
\begin{align}\label{fkp ineq weight}
w_p(n) > w_p^{H}(n):=\left(\frac{p-1}{p}\right)^{p}\frac{1}{|n|^p}.
\end{align}
Clearly, the discrete setting requires very careful treatment as the intuition and techniques developed in the continuous setting do not transfer directly. Now, in the next part of this section, we would like to turn to results, which are closely related to our contribution in this paper.

After Keller, Pinchover and Pogorzelski published their results, there has been an influx of development over the next decade. One of the first results was obtained in \cite{krejcirik2022sharp}, where Krej{\v{c}}i{\v{r}}{\'\i}k and \v{S}tampach obtained a sharp form of the Keller-Pinchover-Pogorzelski inequality and optimality of the weight $w_{2}(n)$: for all $u \in C_c(\mathbb{N}_0)$ with $u(0) = 0$, we have the identity
\begin{multline*}
  \sum_{n=1}^{\infty} |u(n) - u(n-1)|^2=\sum_{n=1}^{\infty} w_2(n) |u(n)|^2
  \\+ \sum_{n=2}^{\infty} \left|
    \sqrt[4]{\frac{n-1}{n}} \, u(n)
    - \sqrt[4]{\frac{n}{n-1}} \, u(n-1)
  \right|^2.
\end{multline*}
In particular, the corresponding inequality holds and, in addition, is optimal. 

Then, Gupta, in \cite{gupta2022discrete}, derived a condition for obtaining general weighted Hardy inequalities for $p=2$: let us define a combinatorial Laplacian for a real-valued function $\varphi$ on $\mathbb{N}_{0}$ as
\begin{align*}
\Delta\varphi(n) :=
\begin{cases}
  \varphi(n) - \varphi(n-1) + \varphi(n) - \varphi(n+1) & \text{for } n \geq 1, \\
  \varphi(n) - \varphi(n+1) & \text{for } n = 0.
\end{cases}
\end{align*}
Let $v$ and $w$ be non-negative functions on $\mathbb{N}$. Assume there exists a function
$\varphi : \mathbb{N}_0 \to [0, \infty)$, which is positive on $\mathbb{N}$ such that
\begin{align}\label{gupta cond}
  \Big(\Delta\varphi(n) v(n) - (\varphi(n+1) - \varphi(n))(v(n+1) - v(n))\Big)
  \geq w(n)\varphi(n)
\end{align}
for all $n \in \mathbb{N}$. Then the following inequality holds true
\begin{align}\label{general weighted gupta}
  \sum_{n=1}^{\infty} v(n)|u(n) - u(n-1)|^2 
  \geq \sum_{n=1}^{\infty} w(n) |u(n)|^2
\end{align}
for $u \in C_c(\mathbb{N}_0)$ and $u(0) = 0$. 

Under a certain choice of weights $(v(n), \varphi(n), w(n))$, Gupta was able to obtain a discrete Hardy inequality with power weights $n^{\alpha}$, which in the special case recovers (\ref{FKP}, $p=2$). The idea behind the proof of (\ref{general weighted gupta}) actually comes from a well-known technique in the continuous setting, called the supersolution technique (see \cite{sekar2006role, adimurthi2006optimal, frank2008non, ghoussoub2011bessel, ghoussoub2013functional, cazacu2021method}). The method allows one to obtain general weighted Hardy inequalities by just finding a solution to a differential equation. As one can see, the condition (\ref{gupta cond}) essentially acts as a \say{differential equation} in the discrete framework, which if satisfied for a given pair of weights, gives the desired inequality. 

This approach was later extended to locally finite graphs with sharp remainder terms by Huang and Ye \cite{huang2024onedimensional}: 
let $v(n)$ be a non-negative function on $\mathbb{N}$ and $\varphi: \mathbb{N}_{0}\to [0,\infty)$ be such that $\varphi(n) > 0$
for all $n \geq 1$ and $\varphi(0)=0$. Then, for all complex-valued $u \in C_c(\mathbb{N}_{0})$, we have the identity
\begin{multline}\label{huang-ye}
  \sum_{n=1}^{\infty} v(n) |u(n)-u(n-1)|^2
  + \sum_{n=1}^{\infty} \frac{\operatorname{div}(v(n)(\nabla \varphi(n)))}{\varphi(n)} |u(n)|^2
  \\= \sum_{n=1}^{\infty} v(n+1) \left|
    \sqrt{\frac{\varphi(n)}{\varphi(n+1)}} \, u(n+1)
    - \sqrt{\frac{\varphi(n+1)}{\varphi(n)}} \, u(n)
  \right|^2,
\end{multline}
where $\nabla \varphi(n) := \varphi(n) - \varphi(n-1)$ and
\begin{align*}
\operatorname{div} h(n) := h(n+1) - h(n).
\end{align*}

An important next step in this direction was to consider higher-order generalizations of (\ref{huang-ye}). In the literature, these are often called Hardy-Rellich-Birman inequalities due to Birman \cite{birman1961spectrum} who was the first to generalize (\ref{p-hardy continuous}, $p=2$) to higher order derivatives. Indeed, this exact analysis was done in \cite{stampach2024optimal} by {\v{S}}tampach and Waclawek, where they not only successfully recovered optimal discrete Keller-Pinchover-Pogorzelski, Rellich, Huang-Ye and Birman weights, but also proved and improved the conjecture posed by Gerh{\'a}t-Krej{\v{c}}i{\v{r}}{\'\i}k-{\v{S}}tampach in \cite{gerhat2025improved}. We also refer to \cite{ciaurri2018hardy, gupta2024onedimensional, gupta2024discrete, gerhat2025criticality} for other similar results.

The most recent result in this line of research is again by {\v{S}}tampach and Waclawek \cite{stampach2026hardy}. There, the authors extended the argument from \cite{stampach2024optimal} to any $1<p<\infty$: let $p > 1$, $v(n) \geq 0$ for all $n \in \mathbb{N}$, $\varphi(n+1) \geq \varphi(n) > 0$
for all $n \in \mathbb{N}$, and $\varphi(0) = 0$. Then for any complex-valued $u \in C_c(\mathbb{N}_0)$
with $u(0) = 0$, we have the inequality
\begin{align}\label{lp stampach}
  \sum_{n=1}^{\infty} v(n) \, |\nabla u(n)|^{p}
  \geq - \sum_{n=1}^{\infty} \frac{\operatorname{div}(v(n)(\nabla \varphi(n))^{p-1})}{\varphi(n)^{p-1}} \, |u(n)|^{p},
\end{align}
where $\nabla \varphi(n) := \varphi(n) - \varphi(n-1)$ and
\begin{align*}
  \operatorname{div} h(n) := h(n+1) - h(n).
\end{align*}

When $p=2$, the inequality (\ref{lp stampach}) recovers the inequality case of (\ref{huang-ye}). However, it does not recover the remainder term of (\ref{huang-ye}). In this paper, we provide a framework for deriving sharp remainder terms in the discrete setting for any $1<p<\infty$. To be precise, we prove the following result: let us define a $C_p$-functional for one-dimensional complex-valued variables $\xi, \eta \in \C$ by
\begin{align*}
C_p(\xi, \eta) := |\xi|^p - |\xi - \eta|^p - p|\xi - \eta|^{p-2}\re \langle \xi-\eta, \eta \rangle \geq 0,
\end{align*}
where $1<p<\infty$. Let $v$ be a non-negative function on $\N$, $w$ be a real-valued function on $\mathbb{N}$ and let $\varphi(n+1)\geq\varphi(n)>0$ for all $n\in \mathbb{N}$ with $\varphi(0)=0$ and
\begin{align}\label{cond intro}
v(n)\bigl(\varphi(n) - \varphi(n-1)\bigr)^{p-1} - v(n+1)\bigl(\varphi(n+1) - \varphi(n)\bigr)^{p-1}\geq w(n)\varphi(n)^{p-1}.
\end{align}
Then, for every complex-valued $u \in C_c(\N_0)$ with $u(0) = 0$, we have
\begin{align}\label{eq hardy identity intro}
\sum_{n=1}^{\infty} v(n)|u(n) - u(n-1)|^p
\geq \sum_{n=1}^{\infty} w(n)|u(n)|^p + \sum_{n=2}^{\infty} v(n)R_p(u(n),\varphi(n)),
\end{align}
where at each index $n\geq 2$, we have
\begin{multline*}
R_p(u(n),\varphi(n))
:= C_p\!\left(u(n) - u(n-1),\;\frac{\varphi(n-1)}{\varphi(n)}\,u(n) - u(n-1)\right) \\ + \varphi(n-1)\bigl(\varphi(n) - \varphi(n-1)\bigr)^{p-1}\,C_p\!\left(\frac{u(n-1)}{\varphi(n-1)},\;\frac{u(n-1)}{\varphi(n-1)} - \frac{u(n)}{\varphi(n)}\right).
\end{multline*}
Consequently, if $v(n)>0$ for every $n\geq2$, then the remainder term in (\ref{eq hardy identity}) vanishes if and only if
$
u(n)=c\phi(n)
$
for every $n\in\mathbb{N}$ and some $c\in\mathbb{C}$. Moreover, we have equality in (\ref{eq hardy identity intro}) if (\ref{cond intro}) is satisfied with equality.

The existing results on $C_p$-functional give the non-negativity of the remainder term $R_{p}(u(n),\varphi(n))$. This allows us to drop it, thereby recovering the general weighted discrete $p$-Hardy inequality (\ref{lp stampach}) by {\v{S}}tampach and Waclawek. We also note that compared to (\ref{lp stampach}), we write (\ref{eq hardy identity intro}) in double weighted form for simplicity and convenience.

As we said, (\ref{lp stampach}) does not recover the remainder term of (\ref{huang-ye}) as its proof is based on the algebraic inequality from \cite[Lemma 2.6]{frank2008non}. As a result, the equality case is lost. Nevertheless, when $p=2$, in our identity (\ref{eq hardy identity intro}), we are able to get exactly the identity of Huang and Ye (\ref{huang-ye}) (see Section \ref{sec proofs 3} for more details).

In addition, similarly as in \cite{stampach2026hardy}, we can specify the weights $v(n)$ and $\varphi(n)$ to obtain previously known and new discrete Hardy inequalities and identities. For instance, choosing 
\begin{multline*}
v(n) := \begin{cases}
  0 & \text{if } n = 1, \\
  (n-1)^{\alpha} & \text{if } n \geq 2,
\end{cases}
\\
\varphi(n)=\hat{\varphi}_{Cop}(n) := \begin{cases}
  0 & \text{if } n = 0, \\
  \varphi_{Cop}(n+1) & \text{if } n \geq 1,
\end{cases}
\end{multline*}
where $\varphi_{Cop}(n) = \Gamma\!\left(n + 1 - \frac{\alpha+1}{p}\right) / \Gamma(n)$ and $\Gamma$ is the Euler Gamma function, implies the following version of the Copson inequality (see \cite{copson1928note, bennett1987elementary, das2023improvements, das2025improved, das2025improvement} and \cite[Theorem 3]{stampach2026hardy}) with remainder terms for $\alpha<0$ and $1<p<\infty$: let $u \in C_{c}(\mathbb{N}_0)$ be a complex-valued compactly supported sequence with $u(0)=u(1)=0$, then
\begin{multline*}
\sum_{n=1}^{\infty} n^{\alpha} |u(n)-u(n-1)|^{p}
\geq \left(\frac{p - \alpha - 1}{p}\right)^{p} \sum_{n=1}^{\infty} (n+1)^{\alpha-p} |u(n)|^{p}\\+\sum_{n=2}^{\infty}n^{\alpha}R_{p}\left(u(n), \hat{\varphi}_{Cop}(n)\right),
\end{multline*}
where the constant $\left(\frac{p - \alpha - 1}{p}\right)^{p}$ is optimal. 

We are also able to obtain the sharp remainder term of the inequality by Gupta \cite[Theorem 2.1]{gupta2022discrete} for any $1<p<\infty$ and complex-valued sequences by taking
\begin{align*}
v(n)=n^{\alpha} \quad \text{and} \quad \varphi(n)=\varphi_{\beta}(n)=n^{\beta} \quad \text{for} \quad \alpha \in \mathbb{R} \quad \text{and} \quad \beta> 0.
\end{align*}
This gives us the next result: let $u\in C_{c}(\mathbb{N}_0)$ be a complex-valued compactly supported sequence with $u(0)=0$, then
\begin{align}\label{gen gupta}
\sum_{n=1}^{\infty} n^{\alpha}|u(n) - u(n-1)|^p 
= \sum_{n=1}^{\infty}w_{p,\alpha,\beta}(n)|u(n)|^{p}+\sum_{n=2}^{\infty}n^{\alpha}R_{p}\left(u(n), \varphi_{\beta}(n)\right),
\end{align}
where
\begin{align*}
w_{p,\alpha,\beta}(n)=n^{\alpha} \left[\left(1 - \left(1 - \frac{1}{n}\right)^{\beta}\right)^{p-1}
- \left(1 + \frac{1}{n}\right)^{\alpha} \left(\left(1 + \frac{1}{n}\right)^{\beta} - 1\right)^{p-1}\right]
\end{align*}
for $n\geq2$, and $w_{p,\alpha,\beta}(1):=1-2^{\alpha}\left(2^{\beta}-1\right)^{p-1}$. Moreover, when $p=2$, identity \eqref{gen gupta} extends to $\beta\in\mathbb{R}$. 

After dropping the remainder term, the $p=2$ case of (\ref{gen gupta}) implies \cite[Theorem 2.1]{gupta2022discrete} for $\beta\in \mathbb{R}$. Setting $\alpha=0$ and $\beta=\frac{p-1}{p}$ yields the sharp version of the Fischer-Keller-Pogorzelski inequality (\ref{FKP}). If we combine this condition with $p=2$ (that is, $p=2$, $\alpha=0$, and $\beta=\frac{p-1}{p}$), we deduce the identity established by Krej{\v{c}}i{\v{r}}{\'\i}k and \v{S}tampach \cite[Theorem 1]{krejcirik2022sharp}. Additionally, specifying $\beta=-\frac{\alpha-p+1}{p}$, we get the sharp form of \cite[Theorem 4.11]{barki2024sharp} by Barki with the condition that $0\leq \alpha<p-1$. 

Finally, we would like to end this section with an application of (\ref{eq hardy identity intro}) to the question of stability (or rather quantitative forms of (\ref{p-hardy dis 2})). This question concerns the limiting behavior of (\ref{p-hardy dis 2}). Particularly, one asks: given some functional inequality $\mathcal{F}(u)\geq c\mathcal{G}(u)$ and a set of minimizers $\mathcal{M}$, 
\begin{align*}
\mathcal{F}(u)- c\mathcal{G}(u) \to 0 \quad \implies \quad u \to \tilde{u}\in \mathcal{M} \quad ?
\end{align*}
A quantitative version of this question seeks an estimate of the form
$$
\mathcal{F}(u)- c\mathcal{G}(u)\geq \Psi\bigl(d(u,\mathcal M)\bigr),
$$
where $d(u,\mathcal M)$ measures the distance from $u$ to the set of minimizers $\mathcal M$, and $\Psi:[0,\infty)\to[0,\infty)$ satisfies $\Psi(t)>0$ for every $t>0$. Such an estimate quantifies the principle that a small deficit forces $u$ to be close to $\mathcal M$. This kind of analysis was first done by Bianchi and Egnell for the Sobolev inequality \cite{bianchi1991note}. Answering such a question for various inequalities (e.g. Poincar\'e, Gagliardo–Nirenberg and etc.) turns out to have beautiful connections in the theory of optimal transport, symmetrization theory, spectral analysis and fast diffusion flows (see \cite{cianchi2009sharp, brasco2012sharp, figalli2022sharp, dolbeault2025sharp, bonforte2025stability, yessirkegenov2026stability}).

Now, for the Hardy inequality specifically a lot has been done in the continuous setting. First, let us note that for both continuous (\ref{p-hardy continuous}) and discrete (\ref{p-hardy dis 2}) versions, the Hardy inequality actually admits no non-trivial minimizers. Although the \say{formal} minimizers are known to be in the family $c|x|^{\frac{p-1}{p}}$ (or $cn^{\frac{p-1}{p}}$ in the discrete) for $c\in \mathbb{R}\backslash \{0\}$, they do not lie in the appropriate space of functions. This differs from other inequalities such as Sobolev or Poincar\'e, whose minimizers are admissible in the corresponding function spaces.

In \cite{cianchi2008hardy}, Cianchi and Ferone showed that despite the fact that the continuous (multidimensional) Hardy inequality (\ref{p-hardy continuous}) admits no minimizers (unless $u=0$), any minimizing sequence of functions must approach the family of non-trivial minimizers $c|x|^{\frac{p-1}{p}}$. Later, similar results have been obtained in other settings \cite{sano2017scale, ioku2017hardy, sano2018scaling, bez2018stability, sano2018improvements, ruzhansky2018note, ruzhansky2019critical, banerjee2026sharp}. However, to the best of our knowledge, no such results are available in the discrete setting. There is a related quantitative result by Barki \cite{barki2024sharp}, where stability was established by showing that the deficit of the classical discrete $p$-Hardy inequality \eqref{p-hardy dis 2} controls a weighted distance to the trivial zero sequence. While this provides a rate of vanishing in a specific weighted norm, it does not capture the actual asymptotic shape of the minimizing sequences. In this paper, we fill this gap by proving that the deficit controls the weighted distance to the family of non-trivial minimizers $cn^{\frac{p-1}{p}}$.

The paper is organized as follows. In Section~\ref{sec pre}, we collect the necessary preliminary results, including the definition, key properties of the $C_p$-functional and state the auxiliary results. Section~\ref{sec main} contains the main results of this paper: the general weighted discrete $p$-Hardy inequality with a sharp remainder (Theorem~\ref{thm hardy identity}), its various consequences and the quantitative stability result (Theorem~\ref{thm hardy stability}). In Section~\ref{sec proofs 1}, we prove the preliminary lemmata, namely the $C_p$-algebraic identity (Lemma~\ref{prop cp identity}) and the discrete critical Hardy inequality (Lemma~\ref{barki almost}). Section~\ref{sec proofs 2} is devoted to the proofs of the two main theorems. Finally, in Section~\ref{sec proofs 3}, we show how the general identity reduces to the Huang-Ye identity when $p=2$ (Corollary~\ref{cor 1}).

\section{Preliminaries}\label{sec pre}

In this section, we provide the necessary preliminary notation and results. First, we recall the definition of the $C_p$-functional and its properties. 

\begin{defn}
Let $1<p<\infty$ and $d\geq 1$. Then, for $\xi,\eta\in\mathbb{C}^{d}$, we define
\begin{align}\label{cp formula}
C_p(\xi,\eta):=|\xi|^p-|\xi-\eta|^p-p|\xi-\eta|^{p-2}\textnormal{Re}\langle \xi-\eta, \eta \rangle\geq0.
\end{align}
\end{defn}

One of the most important properties of the $C_p$-term is its lower bound for $p\geq 2$ by Cazacu, Krej{\v{c}}i{\v{r}}{\'\i}k, Lam and Laptev \cite{cazacu2024hardy}.

\begin{lem}[\text{\cite[Step 3 of Proof of Theorem 1.2]{cazacu2024hardy}}]\label{lem1}
Let $p\geq2$ and $d\geq 1$. Then, for $\xi,\eta\in\mathbb{C}^d$, we have
\begin{align*}
C_{p}(\xi,\eta)\geq c_1(p)|\eta|^{p},
\end{align*}
where 
\begin{align*}
c_1(p)
= \inf_{(s,t)\in\mathbb{R}^2\setminus\{(0,0)\}}
\frac{\bigl[t^2 + s^2 + 2s + 1\bigr]^{\frac p2} -1-ps}
{\bigl[t^2 + s^2\bigr]^{\frac p2}}\in(0,1].
\end{align*}
\end{lem}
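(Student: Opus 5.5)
Since $C_p(\xi,\eta)$ depends only on $\xi$ and $\eta$ through the difference $\xi-\eta$ and on $\eta$, the first step is to substitute $\zeta:=\xi-\eta$, so that
\begin{align*}
C_p(\xi,\eta)=|\zeta+\eta|^p-|\zeta|^p-p|\zeta|^{p-2}\re\langle \zeta,\eta\rangle .
\end{align*}
If $\eta=0$ the claim is trivial, so we assume $\eta\neq0$. If $\zeta=0$ the expression equals $|\eta|^p$ (with the convention $|\zeta|^{p-2}\zeta=0$, valid for $p\ge2$), which is at least $c_1(p)|\eta|^p$ provided $c_1(p)\le1$. For $\zeta\neq0$, we use the $p$-homogeneity of both sides under $(\zeta,\eta)\mapsto(\lambda\zeta,\lambda\eta)$ with $\lambda=1/|\zeta|$ to reduce to the case $|\zeta|=1$.

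With $|\zeta|=1$, we decompose $\eta$ in the real inner product $\re\langle\cdot,\cdot\rangle$ on $\C^d\cong\R^{2d}$. Write $s:=\re\langle\zeta,\eta\rangle$ and $t:=|\eta-s\zeta|$, so that $|\eta|^2=s^2+t^2$. Then $|\zeta+\eta|^2=1+2s+s^2+t^2$, and therefore
\begin{align*}
\frac{C_p(\xi,\eta)}{|\eta|^p}=\frac{\bigl[t^2+s^2+2s+1\bigr]^{p/2}-1-ps}{\bigl[t^2+s^2\bigr]^{p/2}}\geq c_1(p),
\end{align*}
directly from the definition of the infimum. The case $\zeta=0$ is recovered as the limit $(s,t)\to\infty$, where the quotient tends to $1$, so no separate treatment is really needed once $c_1(p)\le 1$ is known.

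It remains to show $c_1(p)\in(0,1]$, which is the substantive part. For the upper bound, take $s=0$ and let $t\to\infty$: the quotient $\bigl((t^2+1)^{p/2}-1\bigr)/t^p\to1$, so $c_1(p)\le1$.

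For positivity, denote the quotient by $f(s,t)$. The numerator is $g(1+\eta)-g(1)-\nabla g(1)\cdot\eta$ with $g=|\cdot|^p$ convex on $\R^2$, where we identify $\zeta=(1,0)$ and $\eta=(s,t)$. Since $g$ is strictly convex, the numerator is strictly positive for $(s,t)\neq0$, so $f$ is continuous and positive on $\R^2\setminus\{0\}$. We then check the two ends:
\begin{itemize}[leftmargin=*]
\item Near the origin, Taylor expansion gives numerator $\approx \tfrac p2\bigl((p-1)s^2+t^2\bigr)$, so $f\sim(s^2+t^2)^{1-p/2}\cdot(\text{positive})$. This tends to $+\infty$ for $p>2$ and is bounded below by $\min(1,p-1)>0$ for $p=2$.
\item At infinity, $f\to1$.
\end{itemize}
Hence the infimum is taken over a compact annulus, together with limits that are all positive, so $c_1(p)>0$.

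The main obstacle is making this compactness argument uniform near the origin for $p=2$ and at infinity, but both limits are explicit, so the argument closes.
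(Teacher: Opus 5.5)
Your proof is correct. Normalizing $|\xi-\eta|=1$ and splitting $\eta$ into components parallel and orthogonal to $\xi-\eta$ (with respect to $\re\langle\cdot,\cdot\rangle$) is exactly the reduction that produces the two-variable formula for $c_1(p)$, and your argument that $c_1(p)\in(0,1]$ is sound: the limit along $s=0$, $t\to\infty$ gives the upper bound, while strict convexity of $|\cdot|^p$, the Hessian expansion at $(1,0)$, the uniform limit $1$ at infinity and compactness of the annulus give positivity. The paper itself gives no proof and only cites Step 3 of the proof of Theorem 1.2 in \cite{cazacu2024hardy}; since the $(s,t)$-infimum form of $c_1(p)$ reflects the same reduction, your proposal is essentially a self-contained version of the cited argument (and for $p=2$ the quotient is identically $1$, giving $c_1(2)=1$ as used later in the paper).
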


The next property is concerned with the homogeneity of the function $C_p$. Recall that a general function $f$ of $n$ variables is called absolutely homogeneous of degree $k$ if
\begin{align*}
f(\lambda x_{1}, \ldots, \lambda x_{n}) = |\lambda|^{k}f(x_{1},\ldots, x_{n})
\end{align*}
for every $\lambda \neq 0$ and $x_{1}, \ldots, x_{n}$. The property below can be verified directly using the formula (\ref{cp formula}) and the definition of absolute homogeneity.

\begin{lem}
The $C_{p}$-functional is an absolutely homogeneous function of degree $p$. 
\end{lem}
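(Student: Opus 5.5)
The plan is to substitute $(\lambda\xi,\lambda\eta)$ directly into the defining formula \eqref{cp formula} and pull the factor $|\lambda|^p$ out of each of the three terms separately. The only properties needed are that the modulus on $\mathbb{C}^d$ is absolutely homogeneous, $|\lambda z|=|\lambda||z|$, and that the Hermitian inner product is sesquilinear.

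Fix $\lambda\neq0$ and $\xi,\eta\in\mathbb{C}^d$. The first two terms are immediate: $|\lambda\xi|^p=|\lambda|^p|\xi|^p$, and since $\lambda\xi-\lambda\eta=\lambda(\xi-\eta)$ we also get $|\lambda\xi-\lambda\eta|^p=|\lambda|^p|\xi-\eta|^p$.

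For the third term, sesquilinearity gives
\[
\langle \lambda(\xi-\eta),\lambda\eta\rangle=\lambda\bar\lambda\,\langle\xi-\eta,\eta\rangle=|\lambda|^2\langle\xi-\eta,\eta\rangle
\]
(and likewise under the other convention, linear in the second slot). Since $|\lambda|^2$ is a positive real number, it passes through the real part: $\re\langle \lambda(\xi-\eta),\lambda\eta\rangle=|\lambda|^2\re\langle\xi-\eta,\eta\rangle$. Combining this with $|\lambda(\xi-\eta)|^{p-2}=|\lambda|^{p-2}|\xi-\eta|^{p-2}$, the third term scales by $|\lambda|^{p-2}\cdot|\lambda|^2=|\lambda|^p$.

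Summing the three terms gives $C_p(\lambda\xi,\lambda\eta)=|\lambda|^pC_p(\xi,\eta)$. The one point to check is the case $\xi=\eta$ when $1<p<2$, where $|\xi-\eta|^{p-2}$ is singular. Here the third term should be read as $0$, following the standard convention (the product $|\xi-\eta|^{p-2}(\xi-\eta)$ tends to $0$). Since $\lambda\xi=\lambda\eta$ exactly when $\xi=\eta$, the same convention applies on both sides, and the identity still holds. I expect no real obstacle beyond this remark; the only care needed is using $\lambda\bar\lambda=|\lambda|^2$, so that complex phases do not affect the real part.
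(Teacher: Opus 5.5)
Your proposal is correct and is the same direct substitution into \eqref{cp formula} that the paper appeals to without giving details. You correctly use $\langle\lambda(\xi-\eta),\lambda\eta\rangle=|\lambda|^2\langle\xi-\eta,\eta\rangle$ for complex $\lambda$, which the paper needs when it scales by $Y\in\mathbb{C}$ in the proof of Theorem~\ref{thm hardy identity}. Your convention at $\xi=\eta$ for $1<p<2$ is a worthwhile point that the paper leaves implicit.
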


Finally, the sharp remainder term of the algebraic inequality \cite[Lemma 2.6]{frank2008non} is established.

\begin{lem}\label{prop cp identity}
For any $a \in \C$ and $t \in [0,1]$, we have
\begin{align}\label{eq cp identity}
|a - t|^p - (1-t)^{p-1}\bigl(|a|^p - t\bigr) = C_p\bigl(a - t,\; t(a-1)\bigr) + t(1-t)^{p-1}\,C_p(1,\, 1-a).
\end{align}
\end{lem}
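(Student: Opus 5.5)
The plan is to verify \eqref{eq cp identity} by direct expansion: compute each of the two $C_p$-terms on the right-hand side explicitly from \eqref{cp formula}, add them, and check that the result equals the left-hand side. No inequality is needed, since the statement is a pure algebraic identity. Throughout, use the convention $|z|^{p-2}z=0$ at $z=0$, which is the convention already built into \eqref{cp formula} when $\xi=\eta$ and $1<p<2$. With this convention the case $a=0$ is covered by the same computation (it can also be checked by hand).

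\textbf{First term.} Take $\xi=a-t$ and $\eta=t(a-1)$. The key simplification is
\[
\xi-\eta=a-t-ta+t=a(1-t).
\]
Since $t\in[0,1]$, this gives $|\xi-\eta|^p=(1-t)^p|a|^p$ and $|\xi-\eta|^{p-2}=(1-t)^{p-2}|a|^{p-2}$. Moreover,
\[
\re\langle a(1-t),t(a-1)\rangle=t(1-t)\bigl(|a|^2-\re a\bigr),
\]
because $t$ is real. Therefore
\[
C_p\bigl(a-t,t(a-1)\bigr)=|a-t|^p-(1-t)^p|a|^p-pt(1-t)^{p-1}\bigl(|a|^p-|a|^{p-2}\re a\bigr).
\]

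\textbf{Second term.} Take $\xi=1$ and $\eta=1-a$, so that $\xi-\eta=a$ and $\re\langle a,1-a\rangle=\re a-|a|^2$. This gives
\[
C_p(1,1-a)=1+(p-1)|a|^p-p|a|^{p-2}\re a,
\]
which is then multiplied by $t(1-t)^{p-1}$.

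\textbf{Adding.} When the two contributions are added, the terms $\pm pt(1-t)^{p-1}|a|^{p-2}\re a$ cancel. The coefficient of $|a|^p$ collects to
\[
-(1-t)^p-pt(1-t)^{p-1}+(p-1)t(1-t)^{p-1}=-(1-t)^{p-1}\bigl((1-t)+t\bigr)=-(1-t)^{p-1}.
\]
The constant term is $t(1-t)^{p-1}$. The sum is therefore
\[
|a-t|^p-(1-t)^{p-1}|a|^p+t(1-t)^{p-1},
\]
which is exactly the left-hand side of \eqref{eq cp identity}.

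\textbf{Main obstacle.} There is no real obstacle, only bookkeeping. The one point needing care is the factor $|\xi-\eta|^{p-2}$. The identity $|a(1-t)|^{p-2}\,a(1-t)=(1-t)^{p-1}|a|^{p-2}a$ requires $1-t\ge0$, which is where the hypothesis $t\in[0,1]$ is used. The degenerate values $t=1$ and $a=0$ must be handled under the convention stated at the start.
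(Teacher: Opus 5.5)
Your proposal is correct and is essentially the paper's proof: both expand the two $C_p$-terms from the definition using $\xi-\eta=a(1-t)$ and $\xi-\eta=a$, cancel the $|a|^{p-2}\re a$ contributions, and use $(1-t)^p+t(1-t)^{p-1}=(1-t)^{p-1}$. The differences are cosmetic: you split $|a|^{p-2}(|a|^2-\re a)$ into its two pieces before cancelling, and you state explicitly the convention $|z|^{p-2}z=0$ at $z=0$, needed when $t=1$ or $a=0$ and $1<p<2$, which the paper leaves implicit.
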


\begin{rem}\label{rem lem}
We observe that for general $1<p<\infty$, in (\ref{eq cp identity}), we require $0\leq t \leq 1$. However, when $p=2$, we can use the following identity 
\begin{align}\label{simplified}
|a-t|^{2}-(1-t)(|a|^{2}-t)=t|a-1|^{2}
\end{align}
to remove the condition $t \leq 1$. The identity (\ref{simplified}) can be checked by directly opening the brackets.
\end{rem}

We also note other interesting properties of the $C_p$-term in \cite{CT24, huang2025lp, yessirkegenov2026stability}. The final result we will need is due to Barki \cite[Proposition 4.7]{barki2024sharp} or Huang and Ye \cite[Theorem 1.4]{huang2024onedimensional}, where the discrete version of the critical Hardy inequality was proved. For our purposes, however, we provide a slightly improved version for all real-valued $u\in C_{c}(\mathbb{N})$ (see Section \ref{sec proofs 1} and (\ref{discrete hardy form})).

\begin{lem}\label{barki almost}
Let $1<p<\infty$. Then, for all real-valued $u\in C_{c}(\mathbb{N})$, we have
\begin{align}\label{barki almost eq}
\sum_{n=2}^{\infty} |u(n) - u(n-1)|^{p} \, n^{p-1}
\geq \left(\frac{p-1}{p}\right)^{p} \inf_{c \in \R}\sum_{n=2}^{\infty} \frac{|u(n)-c|^{p}}{n \log^{p}(n)}.
\end{align}
\end{lem}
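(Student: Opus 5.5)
We obtain Lemma~\ref{barki almost} from a weighted $p$-Hardy inequality that has an explicit supersolution, after reducing to the case where the infimum in \eqref{barki almost eq} is attained at a ``boundary value.'' Since $u\in C_c(\mathbb{N})$, there is an $N$ with $u(n)=0$ for $n\ge N$. The left-hand side of \eqref{barki almost eq} does not change when a constant is added to $u$, and it does not involve $u(1)$ at all. The natural candidate constant is $c=u(1)$, so we set $f(n):=u(n)-u(1)$ for $n\geq 1$. Then $f(1)=0$ and $\nabla f=\nabla u$ on $n\ge 2$, although $f$ is no longer compactly supported: $f(n)=-u(1)$ for $n\ge N$. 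Since $\sum_n 1/(n\log^p n)<\infty$ for $p>1$, the right-hand side with $c=u(1)$ is finite. It therefore suffices to prove
\begin{align*}
\sum_{n=2}^{\infty} n^{p-1}|f(n)-f(n-1)|^p\ \ge\ \Bigl(\frac{p-1}{p}\Bigr)^p\sum_{n=2}^{\infty}\frac{|f(n)|^p}{n\log^p n}
\end{align*}
for every real $f$ with $f(1)=0$ that is eventually constant. The infimum over $c$ is then bounded above by this particular choice.

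To prove the displayed inequality, apply the general weighted inequality stated in the introduction, \eqref{eq hardy identity intro} together with \eqref{cond intro}. Shift indices so that $f(1)=0$ plays the role of $u(0)=0$, drop the nonnegative remainder, and take $v(n)=n^{p-1}$. For $\varphi$, use the continuous analogue of the critical Hardy minimizer, $\varphi(n)=\log^{\frac{p-1}{p}}(n)$ for $n\ge1$. This is increasing with $\varphi(1)=0$, which matches the hypotheses $\varphi(0)=0$ and $\varphi$ nondecreasing after the shift. Condition \eqref{cond intro} then asks us to check
\begin{align*}
n^{p-1}\bigl(\varphi(n)-\varphi(n-1)\bigr)^{p-1}-(n+1)^{p-1}\bigl(\varphi(n+1)-\varphi(n)\bigr)^{p-1}\ \ge\ \Bigl(\frac{p-1}{p}\Bigr)^p\frac{\varphi(n)^{p-1}}{n\log^p n}.
\end{align*}
The continuous computation shows where the constant comes from. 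With $g(x)=x^{p-1}(\varphi'(x))^{p-1}=\bigl(\tfrac{p-1}{p}\bigr)^{p-1}\log^{-1/p}(x)$, we get $-g'(x)=\bigl(\tfrac{p-1}{p}\bigr)^{p-1}\tfrac1p\,x^{-1}\log^{-1-1/p}(x)$, and this equals exactly $\bigl(\tfrac{p-1}{p}\bigr)^p\varphi^{p-1}/(x\log^p x)$.

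The main obstacle is verifying this discrete supersolution inequality for every $n\ge2$, and in particular for small $n$. There $\varphi(n)-\varphi(n-1)$ differs substantially from $\varphi'(n)$; for example, $\varphi(2)-\varphi(1)=\log^{(p-1)/p}2$. The plan is to use concavity. Since $\varphi$ is concave for $x\ge e^{1/p}$ or so, we have $n(\varphi(n)-\varphi(n-1))\ge n\varphi'(n)$ and $(n+1)(\varphi(n+1)-\varphi(n))\le (n+1)\varphi'(n)$. This should be combined with a mean-value comparison of $h(x)=x\varphi'(x)$, or more simply with the observation that $x\varphi'(x)$ is decreasing. The concavity bounds give the left side minus the right side at least $\bigl[(n\varphi'(n))^{p-1}-((n+1)\varphi'(n))^{p-1}\bigr]$ plus a correction term, and this has the wrong sign. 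So the direct route is to write the left side as $-\int_n^{n+1}\frac{d}{dx}\bigl[G(x)\bigr]\,dx$ with $G(x)=\bigl(x(\varphi(x)-\varphi(x-1))\bigr)^{p-1}$ and to show $-G'(x)\ge$ the target at $x=n$, using monotonicity of the target. If this fails for small $n$, the fallback is a different choice of supersolution. Either $\varphi(n)=\log^{\frac{p-1}{p}}(n)$ can be replaced by a discrete-adapted variant, such as a telescoping product chosen so that \eqref{cond intro} holds with equality up to the required weight, or we can follow Barki's/Huang–Ye's argument from \cite[Proposition 4.7]{barki2024sharp} for compactly supported $u$ and pass to eventually constant $f$ by truncation: apply it to $f\chi_{[1,M]}$ with a linear-in-$\log$ cutoff $f(n)\,\min\{1,\log(M^2/n)/\log M\}$. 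The extra gradient cost is $\sum_{M\le n\le M^2} n^{p-1}|u(1)|^p(n\log M)^{-p}=O(\log^{1-p}M)\to0$, so the inequality passes to the limit $M\to\infty$ by monotone convergence on the right-hand side.
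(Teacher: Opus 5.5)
Your reduction to $f=u-u(1)$ is the same as the paper's. Invoking Theorem~\ref{thm hardy identity} with the remainder dropped uses the same Picone-plus-summation-by-parts mechanism that the paper carries out by hand. Two minor points. First, the left-hand side does involve $u(1)$ through the $n=2$ term; this is harmless since $\nabla f=\nabla u$. Second, the theorem is stated for compactly supported sequences, so for eventually constant $f$ you need your logarithmic cutoff, or, as the paper does, you can sum only up to $M$ and drop the nonnegative boundary term $|g(M)|^p h(M)/\varphi(M)^{p-1}$.

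The genuine gap is that the one nontrivial estimate is never proved, namely
\[
n^{p-1}\bigl(\varphi(n)-\varphi(n-1)\bigr)^{p-1}-(n+1)^{p-1}\bigl(\varphi(n+1)-\varphi(n)\bigr)^{p-1}\ \ge\ \beta^p\,\frac{\varphi(n)^{p-1}}{n\log^p n},\qquad n\ge 2,
\]
for $\varphi=\log^{\beta}$ with $\beta=\frac{p-1}{p}$. This is not a routine check. Replace the discrete fluxes $n(\varphi(n)-\varphi(n-1))$ by the continuous ones $n\varphi'(n)=\beta(\log n)^{-1/p}$. The left side then equals $\beta^p\int_n^{n+1}\frac{dx}{x(\log x)^{1+\beta}}$, which is strictly smaller than the right side $\beta^p/(n(\log n)^{1+\beta})$ because the integrand decreases. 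So the inequality, if true, rests entirely on the excess of the discrete flux over the continuous one (a second-order Taylor effect for large $n$) and on how fast that excess decays. This must be controlled uniformly in $p\in(1,\infty)$ and down to $n=2,3$.

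None of your routes does this:
\begin{itemize}
\item The concavity bound has the wrong sign, as you note.
\item The integral variant asks for a pointwise lower bound on $-G'$ by the target value at $n$ on all of $[n,n+1]$. That is stronger than what is needed, and it is not carried out.
\item The ``discrete-adapted'' $\varphi$ fallback is unspecified.
\item The Barki fallback quotes his Proposition 4.7 without checking its exact form. Your $O(\log^{1-p}M)$ cutoff estimate is correct, but it only handles the passage from compact support to eventual constancy.
\end{itemize}

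The missing idea, which is what the paper does, is to prescribe the discrete flux rather than $\varphi$ itself. Set $\varphi(1)=0$ and $\nabla\varphi(n)=\beta/\bigl(n\log^{1/p}(n-\tfrac12)\bigr)$, so that $h(n):=n^{p-1}(\nabla\varphi(n))^{p-1}=\beta^{p-1}\bigl(\log(n-\tfrac12)\bigr)^{-\beta}$ exactly. The condition then splits into two independent midpoint (Hermite--Hadamard) estimates for convex functions.
\begin{itemize}
\item $\varphi(n)<(\log n)^{\beta}$. This follows from convexity of $x^{-1}(\log x)^{-1/p}$ on each $[k-1,k]$, together with $1/k<1/(k-\tfrac12)$.
\item $h(n)-h(n+1)=\beta^p\int_{n-1/2}^{n+1/2}\frac{dx}{x(\log x)^{1+\beta}}>\frac{\beta^p}{n(\log n)^{1+\beta}}$. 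Here the half-shift centres the interval at $n$, which gives the favourable sign.
\end{itemize}
Combining the two and using $1+\beta+\beta(p-1)=p$ gives $\frac{h(n)-h(n+1)}{\varphi(n)^{p-1}}>\beta^p\frac{1}{n\log^p n}$, which is exactly the required condition. If you want to keep $\varphi=\log^{\beta}$, you need an equally explicit argument for the displayed inequality. As written, the proposal has the right framework but lacks its key estimate.
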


\section{Main results}\label{sec main}

In this section, we present the main results of this paper, namely the sharp form of the general weighted discrete $p$-Hardy inequality and the stability results. In between, we also apply our results to obtain old and new discrete Hardy inequalities and identities.

\begin{thm}\label{thm hardy identity}
Let $1 < p < \infty$, $v$ be a non-negative function on $\N$ and $w$ be a real-valued function on $\mathbb{N}$. Let $\varphi(n+1)\geq\varphi(n)>0$ for all $n\in \mathbb{N}$ with $\varphi(0)=0$ and
\begin{align}\label{cond}
v(n)\bigl(\varphi(n) - \varphi(n-1)\bigr)^{p-1} - v(n+1)\bigl(\varphi(n+1) - \varphi(n)\bigr)^{p-1}\geq w(n)\varphi(n)^{p-1}.
\end{align}
Then, for every complex-valued $u \in C_c(\N_0)$ with $u(0) = 0$, we have
\begin{align}\label{eq hardy identity}
\sum_{n=1}^{\infty} v(n)|u(n) - u(n-1)|^p
\geq \sum_{n=1}^{\infty} w(n)|u(n)|^p + \sum_{n=2}^{\infty} v(n)R_p(u(n),\varphi(n)),
\end{align}
where at each index $n\geq 2$, we have
\begin{multline}\label{eq remainder}
R_p(u(n),\varphi(n))
:= C_p\!\left(u(n) - u(n-1),\;\frac{\varphi(n-1)}{\varphi(n)}\,u(n) - u(n-1)\right) \\ + \varphi(n-1)\bigl(\varphi(n) - \varphi(n-1)\bigr)^{p-1}\,C_p\!\left(\frac{u(n-1)}{\varphi(n-1)},\;\frac{u(n-1)}{\varphi(n-1)} - \frac{u(n)}{\varphi(n)}\right).
\end{multline}
Consequently, if $v(n)>0$ for every $n\geq2$, then the remainder term in (\ref{eq hardy identity}) vanishes if and only if
$
u(n)=c\phi(n)
$
for every $n\in\mathbb{N}$ and some $c\in\mathbb{C}$. Moreover, we have equality in (\ref{eq hardy identity}) if (\ref{cond}) is satisfied with equality.
\end{thm}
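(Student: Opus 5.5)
Apply Lemma~\ref{prop cp identity} termwise for $n\ge2$, after normalising each difference $u(n)-u(n-1)$ by $u(n)$ and taking $t=\varphi(n-1)/\varphi(n)$. Then sum by parts in the ground-state variable $u/\varphi$, so that the boundary-type terms telescope into the left-hand side of \eqref{cond}.

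\textbf{Step 1: the pointwise identity.} Fix $n\ge 2$ and put $t_n:=\varphi(n-1)/\varphi(n)\in(0,1]$; this is where monotonicity of $\varphi$ is used. If $u(n)\neq0$, set $a:=u(n-1)/u(n)$. Note that $|u(n)-u(n-1)|=|u(n)|\,|a-1|$. We want an expression of the form $|a-t|^p$, so we instead rewrite
\begin{align*}
u(n)-u(n-1)=\Bigl(u(n)-\tfrac{1}{t_n}u(n-1)\Bigr)+\tfrac{1-t_n}{t_n}\,u(n-1),
\end{align*}
which suggests normalising by $u(n-1)$ rather than $u(n)$. More precisely, I will apply \eqref{eq cp identity} with $t=t_n$ and with $a$ chosen so that $a-t$ is proportional to $u(n)-u(n-1)$. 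I will then multiply through by $|u(n)|^p$ (or $|u(n-1)|^p$) and use the absolute $p$-homogeneity of $C_p$ to move the scaling factor inside. By matching arguments, the right-hand side of \eqref{eq cp identity} should become exactly the two $C_p$ terms in \eqref{eq remainder}:
\begin{itemize}
\item the first, $C_p(u(n)-u(n-1),\,t_nu(n)-u(n-1))$;
\item the second, $(1-t_n)^{p-1}t_n\,C_p(1,1-a)$, which after rescaling becomes $\varphi(n-1)(\varphi(n)-\varphi(n-1))^{p-1}C_p\bigl(u(n-1)/\varphi(n-1),\,u(n-1)/\varphi(n-1)-u(n)/\varphi(n)\bigr)$.
\end{itemize}
The left-hand side then reads
\begin{align*}
|u(n)-u(n-1)|^p=\frac{(\nabla\varphi(n))^{p-1}}{\varphi(n)^{p-1}}|u(n)|^p-\frac{(\nabla\varphi(n))^{p-1}}{\varphi(n-1)^{p-1}}|u(n-1)|^p+R_p(u(n),\varphi(n)).
\end{align*}
The degenerate cases ($u(n)=0$ or $u(n-1)=0$) follow by continuity, or by checking directly that both sides are homogeneous and continuous in $(u(n-1),u(n))$. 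At $n=1$ we have $u(0)=0$, so $|u(1)|^p=(\nabla\varphi(1))^{p-1}\varphi(1)^{1-p}|u(1)|^p$ holds trivially.

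\textbf{Step 2: summation.} Multiply the $n$-th identity by $v(n)\ge0$ and sum over $n\ge1$; all sums are finite since $u$ has compact support. Shifting the index in the negative terms gives
\begin{align*}
\sum_{n\ge1}v(n)|\nabla u(n)|^p=\sum_{n\ge1}\frac{v(n)(\nabla\varphi(n))^{p-1}-v(n+1)(\nabla\varphi(n+1))^{p-1}}{\varphi(n)^{p-1}}|u(n)|^p+\sum_{n\ge2}v(n)R_p .
\end{align*}
Condition \eqref{cond} now gives \eqref{eq hardy identity}, with equality when \eqref{cond} is an equality.

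\textbf{Step 3: equality characterisation.} Since $C_p\ge0$, with $C_p(\xi,\eta)=0$ iff $\eta=0$ (strict convexity of $|\cdot|^p$), and $v(n)>0$ for $n\ge 2$, the remainder vanishes iff each first term vanishes. That is, $u(n-1)/\varphi(n-1)=u(n)/\varphi(n)$ for all $n\ge2$, i.e. $u=c\varphi$ on $\mathbb{N}$. Conversely, for $u=c\varphi$ both $C_p$ arguments $\eta$ vanish.

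\textbf{Main obstacle.} The hard part is Step 1: choosing the right $a$ and normalisation so that Lemma~\ref{prop cp identity} produces exactly the arguments in \eqref{eq remainder}, including the prefactor $\varphi(n-1)(\nabla\varphi(n))^{p-1}$, which carries weight $p-1+1$ against the $|\cdot/\varphi|^p$ scaling. I would first try $a=u(n-1)\varphi(n)/(u(n)\varphi(n-1))$, i.e. comparing ratios $u/\varphi$, and divide by $|u(n)|^p$. If the powers do not match, I would fall back on verifying the pointwise identity directly from the definition \eqref{cp formula}: expand both sides, and check that the cross terms $\re\langle\cdot,\cdot\rangle$ cancel.
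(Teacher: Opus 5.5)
Your architecture matches the paper's. You derive a pointwise identity for each $n\ge 2$ from Lemma~\ref{prop cp identity} with $t_n=\varphi(n-1)/\varphi(n)$ and the $p$-homogeneity of $C_p$, then sum by parts and apply \eqref{cond}. Steps 2 and 3 are fine. In Step 3, reading off $\psi(n)=\psi(n-1)$ from the \emph{first} $C_p$ term (with $\psi:=u/\varphi$) is actually more robust than the paper's use of the second term. The second term's prefactor $\varphi(n-1)(\varphi(n)-\varphi(n-1))^{p-1}$ vanishes whenever $\varphi(n)=\varphi(n-1)$.

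The gap is Step 1, which carries all the content. The identity you display there is correct, but you assert it rather than derive it. Moreover, the concrete substitution you would try first fails your own criterion that $a-t$ be proportional to $u(n)-u(n-1)$. Your choice $a=u(n-1)\varphi(n)/(u(n)\varphi(n-1))=\psi(n-1)/\psi(n)$ has the roles of $n$ and $n-1$ reversed. Scaling by $|\psi(n)|^p\varphi(n)^p$ turns $|a-t_n|^p$ into
\[
\Bigl|\tfrac{\varphi(n)}{\varphi(n-1)}\,u(n-1)-\tfrac{\varphi(n-1)}{\varphi(n)}\,u(n)\Bigr|^p ,
\]
not $|u(n)-u(n-1)|^p$.

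The correct choice is forced by the restriction $t\in[0,1]$. Since $u(n)-u(n-1)=\varphi(n)\bigl(\psi(n)-t_n\psi(n-1)\bigr)$, take $a=\psi(n)/\psi(n-1)$ and multiply \eqref{eq cp identity} by $|\psi(n-1)|^p\varphi(n)^p$. So you normalise by the value at $n-1$, as you first suspected. Equivalently, homogenise the lemma to
\[
|X-tY|^p=(1-t)^{p-1}\bigl(|X|^p-t|Y|^p\bigr)+C_p\bigl(X-tY,\,t(X-Y)\bigr)+t(1-t)^{p-1}C_p(Y,\,Y-X),
\]
put $X=\psi(n)$, $Y=\psi(n-1)$, $t=t_n$, and multiply by $\varphi(n)^p$. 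Then everything matches directly:
\begin{itemize}
\item $\varphi(n)\bigl(X-t_nY\bigr)=u(n)-u(n-1)$;
\item $\varphi(n)\,t_n\bigl(\psi(n)-\psi(n-1)\bigr)=t_nu(n)-u(n-1)$;
\item $\varphi(n)^p t_n(1-t_n)^{p-1}=\varphi(n-1)(\varphi(n)-\varphi(n-1))^{p-1}$;
\item $\varphi(n)^p(1-t_n)^{p-1}\bigl(|\psi(n)|^p-t_n|\psi(n-1)|^p\bigr)$ gives exactly your two boundary terms.
\end{itemize}
So \eqref{eq remainder} appears with no further work. The only degenerate case is then $u(n-1)=0$ (i.e.\ $Y=0$), which is checked directly from \eqref{cp formula}. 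Your fallback of expanding both sides by hand would also work, since it just re-proves the lemma in this homogenised form. But it has to actually be carried out for the proof to be complete.
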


A few immediate remarks must be made about the theorem above. To begin with, after dropping the remainder term and rewriting the weight $w(n)$ from the condition (\ref{cond}), we obtain the general weighted $p$-Hardy inequality by {\v{S}}tampach and Waclawek \cite[Proposition 5]{stampach2026hardy}.

\begin{cor}
Let $1<p<\infty$, $v(n) \geq 0$ for all $n \in \mathbb{N}$, $\varphi(n+1) \geq \varphi(n) > 0$
for all $n \in \mathbb{N}$, and $\varphi(0) = 0$. Then for any complex-valued $u \in C_c(\mathbb{N}_0)$
with $u(0) = 0$, we have the inequality
\begin{align*}
  \sum_{n=1}^{\infty} v(n) \, |\nabla u(n)|^{p}
  \geq - \sum_{n=1}^{\infty} \frac{\operatorname{div}(v(n)(\nabla \varphi(n))^{p-1})}{\varphi(n)^{p-1}} \, |u(n)|^{p},
\end{align*}
where $\nabla \varphi(n) := \varphi(n) - \varphi(n-1)$ and
\begin{align*}
  \operatorname{div} h(n) := h(n+1) - h(n).
\end{align*}
\end{cor}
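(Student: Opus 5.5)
The corollary is Theorem~\ref{thm hardy identity} with the weight $w$ chosen so that \eqref{cond} holds with equality, followed by discarding the nonnegative remainder. For $n\in\mathbb{N}$ I would set
\begin{align*}
w(n):=\frac{v(n)\bigl(\varphi(n)-\varphi(n-1)\bigr)^{p-1}-v(n+1)\bigl(\varphi(n+1)-\varphi(n)\bigr)^{p-1}}{\varphi(n)^{p-1}} .
\end{align*}
This is well defined and real-valued because $\varphi(n)>0$ for $n\ge1$. Both differences $\varphi(n)-\varphi(n-1)$ and $\varphi(n+1)-\varphi(n)$ are nonnegative: for $n\ge2$ this follows from monotonicity, and at $n=1$ from $\varphi(1)>0=\varphi(0)$. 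Hence the real powers $(\cdot)^{p-1}$ make sense.

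Next I would observe that $v(n+1)(\nabla\varphi(n+1))^{p-1}-v(n)(\nabla\varphi(n))^{p-1}=\operatorname{div}\bigl(v(n)(\nabla\varphi(n))^{p-1}\bigr)$ by the definitions of $\nabla$ and $\operatorname{div}$. Therefore
\begin{align*}
w(n)=-\frac{\operatorname{div}\bigl(v(n)(\nabla\varphi(n))^{p-1}\bigr)}{\varphi(n)^{p-1}},
\end{align*}
and with this choice \eqref{cond} holds with equality for every $n$.

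The hypotheses on $v$, $\varphi$ and $u$ in the corollary are exactly those of Theorem~\ref{thm hardy identity}. Applying the theorem therefore gives \eqref{eq hardy identity}, and in fact equality, since \eqref{cond} is sharp. It remains to show that $\sum_{n\ge2}v(n)R_p(u(n),\varphi(n))\ge0$ so that this sum can be dropped.

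\begin{itemize}
\item We have $v(n)\ge0$.
\item Each $C_p$ term in \eqref{eq remainder} is nonnegative by the definition \eqref{cp formula}.
\item The coefficient $\varphi(n-1)\bigl(\varphi(n)-\varphi(n-1)\bigr)^{p-1}$ is nonnegative for $n\ge2$.
\end{itemize}

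All sums are finite because $u$ is compactly supported, so no convergence issues arise. Dropping the remainder gives the stated inequality.

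The only delicate point is the one already noted: making sure the boundary index $n=1$, where $\varphi(0)=0$, fits the monotonicity requirement so that the powers are defined. This is immediate here, so there is no real obstacle.
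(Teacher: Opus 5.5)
Your proposal is correct and follows the paper's route exactly. Like the paper, you take $w(n)=-\operatorname{div}\bigl(v(n)(\nabla\varphi(n))^{p-1}\bigr)/\varphi(n)^{p-1}$ so that \eqref{cond} holds with equality, apply Theorem~\ref{thm hardy identity}, and drop the nonnegative remainder $\sum_{n\ge2}v(n)R_p(u(n),\varphi(n))$; your checks at $n=1$ and of the sign of $\varphi(n-1)\bigl(\varphi(n)-\varphi(n-1)\bigr)^{p-1}$ are details the paper leaves implicit.
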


Then, if we specify $p=2$ in (\ref{eq hardy identity}), using the observation discussed in Remark \ref{rem lem}, we can recover the identity of Huang and Ye \cite{huang2024onedimensional} on $\mathbb{N}$. 

\begin{cor}\label{cor 1}
Let $v$ be a non-negative function on $\mathbb{N}$ and $\varphi: \mathbb{N}_{0}\to [0,\infty)$ be such that $\varphi(n) > 0$
for all $n \geq 1$ and $\varphi(0)=0$. Then for all complex-valued $u \in C_c(\mathbb{N}_{0})$ with $u(0)=0$, we have the identity
\begin{multline*}
  \sum_{n=1}^{\infty} v(n) |u(n)-u(n-1)|^2
  + \sum_{n=1}^{\infty} \frac{\operatorname{div}(v(n)(\nabla \varphi(n)))}{\varphi(n)} |u(n)|^2
  \\= \sum_{n=1}^{\infty} v(n+1) \left|
    \sqrt{\frac{\varphi(n)}{\varphi(n+1)}} \, u(n+1)
    - \sqrt{\frac{\varphi(n+1)}{\varphi(n)}} \, u(n)
  \right|^2,
\end{multline*}
where $\nabla \varphi(n) := \varphi(n) - \varphi(n-1)$ and
\begin{align*}
\operatorname{div} h(n) := h(n+1) - h(n).
\end{align*}
\end{cor}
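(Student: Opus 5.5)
We would not invoke Theorem~\ref{thm hardy identity} as a black box, because Corollary~\ref{cor 1} does not assume that $\varphi$ is nondecreasing. That monotonicity is exactly the condition $t=\varphi(n-1)/\varphi(n)\le 1$ needed in Lemma~\ref{prop cp identity}. Instead we retrace the proof of the theorem with $p=2$. In that case Remark~\ref{rem lem} says the identity \eqref{simplified} holds for every real $t\geq 0$, so any positive $\varphi$ is allowed. In practice this reduces to the following pointwise algebraic identity, valid for $n\ge 2$, complex $u(n),u(n-1)$ and arbitrary positive $\varphi(n),\varphi(n-1)$:
\begin{multline*}
|u(n)-u(n-1)|^2=\bigl(\varphi(n)-\varphi(n-1)\bigr)\left(\frac{|u(n)|^2}{\varphi(n)}-\frac{|u(n-1)|^2}{\varphi(n-1)}\right)\\
+\left|\sqrt{\frac{\varphi(n-1)}{\varphi(n)}}\,u(n)-\sqrt{\frac{\varphi(n)}{\varphi(n-1)}}\,u(n-1)\right|^2 .
\end{multline*}
This is \eqref{simplified} after homogenizing: take $t=\varphi(n-1)/\varphi(n)$ and normalize by $u(n)$ (when $u(n)\ne0$; otherwise the identity is immediate). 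It can also be checked directly by expanding. The two sides agree because the coefficients of $|u(n)|^2$ and $|u(n-1)|^2$ combine to $1$ each, and the cross term is $-2\re\bigl(u(n)\overline{u(n-1)}\bigr)$ on both sides.

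Next, multiply by $v(n)$ and sum over $n\ge2$. All sums are finite because $u$ has compact support. The term $n=1$ is handled separately. Since $u(0)=0$ and $\varphi(0)=0$, we can write $v(1)|u(1)|^2=v(1)\bigl(\varphi(1)-\varphi(0)\bigr)|u(1)|^2/\varphi(1)$, so it fits the same pattern as the first summand above, with the $u(n-1)$ part absent.

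Writing $\nabla\varphi(n)=\varphi(n)-\varphi(n-1)$, we then apply summation by parts (an index shift $n\mapsto n+1$) to the telescoping part:
\begin{align*}
\sum_{n=1}^\infty v(n)\nabla\varphi(n)\frac{|u(n)|^2}{\varphi(n)}-\sum_{n=2}^\infty v(n)\nabla\varphi(n)\frac{|u(n-1)|^2}{\varphi(n-1)}
=-\sum_{n=1}^\infty\frac{\operatorname{div}\bigl(v(n)\nabla\varphi(n)\bigr)}{\varphi(n)}|u(n)|^2 .
\end{align*}
Moving this term to the left-hand side and reindexing the remainder sum $\sum_{n\ge2}$ as $\sum_{n\ge1}$ with $v(n+1)$ gives exactly the stated identity.

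To connect this with the paper's framework, we would also observe that the computation above is the $p=2$ instance of Theorem~\ref{thm hardy identity}, with $w(n)$ chosen so that \eqref{cond} holds with equality. Indeed, for $p=2$ we have $C_2(\xi,\eta)=|\eta|^2$, and the two terms of $R_2$ in \eqref{eq remainder} combine into the single square above. The only point needing care, rather than a genuine obstacle, is the index bookkeeping at $n=1$ (where $\varphi(0)=0$ forbids dividing by $\varphi(n-1)$), together with the fact that only the $p=2$ identity, and not Lemma~\ref{prop cp identity}, is used, so that no monotonicity of $\varphi$ is required.
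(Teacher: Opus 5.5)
Your proof is correct and follows essentially the paper's route. Like the paper, you specialize the proof of Theorem~\ref{thm hardy identity} to $p=2$ via \eqref{simplified}, so no monotonicity of $\varphi$ is needed, merge the two $C_2$ terms of $R_2$ into $\varphi(n-1)\varphi(n)\bigl|u(n)/\varphi(n)-u(n-1)/\varphi(n-1)\bigr|^2$, and finish with an index shift; the only difference is that you verify this merged pointwise identity directly by expansion. One harmless slip: with $t=\varphi(n-1)/\varphi(n)$, homogenizing \eqref{simplified} means dividing by $u(n-1)/\varphi(n-1)$ (i.e.\ $a=\psi(n)/\psi(n-1)$), not by $u(n)$, but your direct expansion makes this immaterial.
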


Now we show how, by choosing the appropriate weights $v(n)$ and $\varphi(n)$ in the general result, we are able to recover several existing and obtain new discrete Hardy inequalities. To demonstrate, if we take
\begin{multline*}
v(n) := \begin{cases}
  0 & \text{if } n = 1, \\
  (n-1)^{\alpha} & \text{if } n \geq 2,
\end{cases}
\\
\varphi(n)=\hat{\varphi}_{Cop}(n) := \begin{cases}
  0 & \text{if } n = 0, \\
  \varphi_{Cop}(n+1) & \text{if } n \geq 1,
\end{cases}
\end{multline*}
where $\varphi_{Cop}(n) = \Gamma\!\left(n + 1 - \frac{\alpha+1}{p}\right) / \Gamma(n)$ and $\Gamma$ is the Euler Gamma function, we derive the next Copson inequality.

\begin{cor}\label{cor copson}
Let $1<p<\infty$, $\alpha<0$ and $u \in C_{c}(\mathbb{N}_0)$ be a complex-valued compactly supported sequence with $u(0)=u(1)=0$, then
\begin{multline}\label{copson ineq}
\sum_{n=1}^{\infty} n^{\alpha} |u(n)-u(n-1)|^{p}
\geq \left(\frac{p - \alpha - 1}{p}\right)^{p} \sum_{n=1}^{\infty} (n+1)^{\alpha-p} |u(n)|^{p}\\+\sum_{n=2}^{\infty}n^{\alpha}R_{p}\left(u(n), \hat{\varphi}_{Cop}(n)\right),
\end{multline}
where the constant $\left(\frac{p - \alpha - 1}{p}\right)^{p}$ is optimal. 
\end{cor}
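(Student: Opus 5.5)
The plan is to obtain Corollary~\ref{cor copson} as a direct specialization of Theorem~\ref{thm hardy identity}, after an index shift that absorbs the vanishing of $v(1)$ and the extra condition $u(1)=0$. Given $u\in C_c(\N_0)$ with $u(0)=u(1)=0$, I would set $\tilde u(n):=u(n-1)$ for $n\geq1$ and $\tilde u(0):=0$. Then $\tilde u\in C_c(\N_0)$ with $\tilde u(0)=0$, and also $\tilde u(1)=0$. With $v(1)=0$ and $v(n)=(n-1)^{\alpha}$ for $n\ge2$,
\begin{align*}
\sum_{n=1}^{\infty} v(n)|\tilde u(n)-\tilde u(n-1)|^p=\sum_{m=1}^{\infty} m^{\alpha}|u(m)-u(m-1)|^p .
\end{align*}
The weighted sum and the remainder shift in the same way. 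This is the bookkeeping that turns the theorem's remainder into the one written in \eqref{copson ineq}; I expect the index conventions there to need some care.

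The hypotheses on $\varphi=\hat\varphi_{Cop}$ follow from Gamma-function identities. Put $\gamma:=\frac{\alpha+1}{p}$, so that $\varphi_{Cop}(n)=\Gamma(n+1-\gamma)/\Gamma(n)$. Using $\Gamma(n+2-\gamma)=(n+1-\gamma)\Gamma(n+1-\gamma)$, one gets
\begin{align*}
\varphi_{Cop}(n+1)-\varphi_{Cop}(n)=(1-\gamma)\frac{\Gamma(n+1-\gamma)}{\Gamma(n+1)} .
\end{align*}
Since $\alpha<0<p-1$ we have $0<1-\gamma$, so $\hat\varphi_{Cop}$ is positive on $\N$ and nondecreasing, with $\hat\varphi_{Cop}(0)=0$ by definition. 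For $n\geq2$ this gives a closed form for $\nabla\hat\varphi_{Cop}(n)$. The case $n=1$ must be handled separately, because $\hat\varphi_{Cop}(1)-\hat\varphi_{Cop}(0)=\varphi_{Cop}(2)$ is not a difference of $\varphi_{Cop}$ values. It causes no trouble, since $v(1)=0$ kills that term in \eqref{cond}.

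The main step is to verify \eqref{cond} with $w(n):=\left(\frac{p-\alpha-1}{p}\right)^p (n+1)^{\alpha-p}$ (suitably shifted). After inserting the Gamma expressions and dividing by $\hat\varphi_{Cop}(n)^{p-1}$, the condition becomes an explicit inequality between ratios of Gamma functions and powers of $n$. This is exactly the inequality that underlies the Copson inequality of \v{S}tampach and Waclawek \cite[Theorem 3]{stampach2026hardy}, so I would first try to quote their computation. If a self-contained argument is needed, I would rewrite the difference as $(1-\gamma)^{p-1}$ times a difference of terms of the form $(n-1)^{\alpha}\bigl(\Gamma(n+1-\gamma)/\Gamma(n+1)\bigr)^{p-1}$. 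I would then apply convexity and log-convexity of $\Gamma$, in the form of Gautschi/Wendel-type ratio bounds, to compare it with $(n+1)^{\alpha-p}\varphi^{p-1}$ and extract the constant $\left(\frac{p-\alpha-1}{p}\right)^p=(1-\gamma)^p$. I expect this step to be the main obstacle, and the hypothesis $\alpha<0$ should be used precisely there. Once \eqref{cond} holds, Theorem~\ref{thm hardy identity} gives \eqref{copson ineq}.

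For optimality of the constant, I would test with truncations of $u(n)=n^{\frac{p-\alpha-1}{p}-\varepsilon}$, cut off smoothly at a large $N$ and with $u(0)=u(1)=0$. The ratio of the two sides should tend to $\left(\frac{p-\alpha-1}{p}\right)^p$ as $N\to\infty$ and then $\varepsilon\to0$, because both series diverge like $\varepsilon^{-1}$ with the stated ratio of leading coefficients. Alternatively, this part can simply be cited from \cite[Theorem 3]{stampach2026hardy}.
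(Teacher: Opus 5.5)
\textbf{The set-up has a gap.} The problem is not in the Gamma-function estimates. You shift twice: you pass to $\tilde u(n)=u(n-1)$ with $v(n)=(n-1)^{\alpha}$, and you also feed the already shifted $\hat\varphi_{Cop}$ into Theorem~\ref{thm hardy identity}. The sentence introducing the corollary does pair these two choices, but the paper's proof does not. As a result $\varphi$ is off by one index relative to the sequence, and this causes two failures.

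First, the remainder you get at $m=n-1$ is built from $\varphi_{Cop}(m+2)$ and $\varphi_{Cop}(m+1)$. The remainder in \eqref{copson ineq} uses $\hat\varphi_{Cop}(m)=\varphi_{Cop}(m+1)$ and $\hat\varphi_{Cop}(m-1)=\varphi_{Cop}(m)$. So your claim that the remainder ``shifts in the same way'' is false.

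Second, and worse, the condition you must verify is false. The coefficient of $|\tilde u(n)|^p=|u(n-1)|^p$ has to be $(1-\gamma)^p n^{\alpha-p}$. Your own difference formula gives $\nabla\hat\varphi_{Cop}(n)/\hat\varphi_{Cop}(n)=(1-\gamma)/(n+1-\gamma)$ and $\nabla\hat\varphi_{Cop}(n+1)/\hat\varphi_{Cop}(n)=(1-\gamma)/(n+1)$. So \eqref{cond} would require
\[
(1-\gamma)^{p-1}\Bigl[(n-1)^{\alpha}(n+1-\gamma)^{1-p}-n^{\alpha}(n+1)^{1-p}\Bigr]\geq(1-\gamma)^{p}n^{\alpha-p}.
\]
Take $p=2$, $\alpha=-\tfrac12$, $n=3$. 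The left side is $\tfrac{\sqrt2}{10}-\tfrac{\sqrt3}{16}\approx0.0332$, while the right side is $\tfrac{\sqrt3}{48}\approx0.0361$. Expanding in $1/n$ shows that in this example the inequality also fails for every large $n$. So neither quoting \cite[Theorem 3]{stampach2026hardy} nor any Gautschi/Wendel bound can close this step.

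\textbf{How to repair it.} Shift only once. There are two equivalent ways:
\begin{enumerate}[label=(\roman*)]
\item Follow the paper: apply the theorem to $\tilde u$ with $v(n)=(n-1)^{\alpha}$ and the unshifted $\varphi_{Cop}$ (with $\varphi_{Cop}(0):=0$), then re-index at the end. Then $\hat\varphi_{Cop}$ appears only in the remainder.
\item Avoid any shift: apply the theorem directly to $u$ with $v(n)=n^{\alpha}$ and $\varphi=\hat\varphi_{Cop}$. The value of $w(1)$ is irrelevant because $u(1)=0$.
\end{enumerate}

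In either form the Gamma functions cancel exactly, since
\[
\frac{\nabla\varphi_{Cop}(n)}{\varphi_{Cop}(n)}=\frac{1-\gamma}{n-\gamma}, \qquad \frac{\nabla\varphi_{Cop}(n+1)}{\varphi_{Cop}(n)}=\frac{1-\gamma}{n}.
\]
In the paper's indexing, \eqref{cond} then reduces to $H(1/n)\geq(1-\gamma)/n$, where $H(x)=(1-x)^{\alpha}(1-\gamma x)^{1-p}-1$.

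This is the tangent-line inequality for $H$ at $0$, because $H(0)=0$ and $H'(0)=1-\gamma$. Convexity of $H$ on $(0,1)$ comes from a quadratic factor of $H''$. Its discriminant is $4\alpha(1+\alpha)^2(p-1)(p-\alpha)$, which is $\le 0$ precisely because $\alpha<0$.

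So the step you flagged as the main obstacle is elementary once the Gamma ratios are divided out. Your optimality argument is fine. Alternatively, cite \cite[Theorem 3]{stampach2026hardy} as the paper does; this suffices because the remainder is non-negative.
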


In the same spirit as in \cite{gupta2022discrete}, we can choose the power weights
\begin{align*}
v(n)=n^{\alpha} \quad \text{and} \quad \varphi(n)=\varphi_{\beta}(n)=n^{\beta} \quad \text{for} \quad \alpha \in \mathbb{R} \quad \text{and} \quad \beta> 0
\end{align*}
to obtain the sharp remainder of the Fischer-Keller-Pogorzelski inequality. In fact, we get an exact identity.
\begin{cor}\label{cor gupta}
Let $1<p<\infty$, $\alpha\in\mathbb{R}$, $\beta > 0$ and $u\in C_{c}(\mathbb{N}_0)$ be a complex-valued compactly supported sequence with $u(0)=0$, then
\begin{align}\label{gen gupta main sec}
\sum_{n=1}^{\infty} n^{\alpha}|u(n) - u(n-1)|^p 
= \sum_{n=1}^{\infty}w_{p,\alpha,\beta}(n)|u(n)|^{p}+\sum_{n=2}^{\infty}n^{\alpha}R_{p}\left(u(n), \varphi_{\beta}(n)\right),
\end{align}
where
\begin{align*}
w_{p,\alpha,\beta}(n)=n^{\alpha} \left[\left(1 - \left(1 - \frac{1}{n}\right)^{\beta}\right)^{p-1}
- \left(1 + \frac{1}{n}\right)^{\alpha} \left(\left(1 + \frac{1}{n}\right)^{\beta} - 1\right)^{p-1}\right]
\end{align*}
for $n\geq2$, and $w_{p,\alpha,\beta}(1):=1-2^{\alpha}\left(2^{\beta}-1\right)^{p-1}$. Moreover, when $p=2$, identity \eqref{gen gupta main sec} extends to $\beta\in\mathbb{R}$.
\end{cor}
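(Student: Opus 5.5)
The plan is to apply Theorem~\ref{thm hardy identity} with $v(n)=n^{\alpha}$ and $\varphi(n)=n^{\beta}$ for $n\ge1$, $\varphi(0)=0$. The weight $w$ is defined so that condition \eqref{cond} holds with equality. The theorem's last assertion then upgrades \eqref{eq hardy identity} to an identity, which is exactly \eqref{gen gupta main sec}. Since $\beta>0$, $\varphi$ is strictly increasing and positive on $\N$. Also $v\ge0$. So the hypotheses of Theorem~\ref{thm hardy identity} are satisfied.

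The key step is computing $w(n)$ from equality in \eqref{cond}:
\begin{align*}
w(n)=\frac{v(n)\bigl(\varphi(n)-\varphi(n-1)\bigr)^{p-1}-v(n+1)\bigl(\varphi(n+1)-\varphi(n)\bigr)^{p-1}}{\varphi(n)^{p-1}}.
\end{align*}
For $n\ge2$, factor $n^{\beta(p-1)}$ out of both differences:
\begin{align*}
\varphi(n)-\varphi(n-1)=n^{\beta}\Bigl(1-\bigl(1-\tfrac1n\bigr)^{\beta}\Bigr),\qquad \varphi(n+1)-\varphi(n)=n^{\beta}\Bigl(\bigl(1+\tfrac1n\bigr)^{\beta}-1\Bigr).
\end{align*}
Also write $(n+1)^{\alpha}=n^{\alpha}(1+\frac1n)^{\alpha}$. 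Dividing by $\varphi(n)^{p-1}=n^{\beta(p-1)}$ then gives exactly $w_{p,\alpha,\beta}(n)$.

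For $n=1$ we use $\varphi(0)=0$. The first term is $1\cdot 1^{p-1}=1$ and the second is $2^{\alpha}(2^{\beta}-1)^{p-1}$. This yields $w_{p,\alpha,\beta}(1)=1-2^{\alpha}(2^{\beta}-1)^{p-1}$. The remainder term is exactly $\sum_{n\ge2}n^{\alpha}R_p(u(n),\varphi_\beta(n))$, by definition of $R_p$ in \eqref{eq remainder}.

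The remaining issue is the extension to $p=2$ with arbitrary $\beta\in\R$, where monotonicity of $\varphi$ fails. The place where the proof of Theorem~\ref{thm hardy identity} presumably uses $\varphi(n-1)\le\varphi(n)$ is Lemma~\ref{prop cp identity}, applied with $t=\varphi(n-1)/\varphi(n)\in[0,1]$ and $a=u(n-1)/u(n)$ (or the homogeneous version). That is also where $(1-t)^{p-1}$ must be well defined.

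For $p=2$, Remark~\ref{rem lem} supplies identity \eqref{simplified}, valid for all real $t\ge0$. Here $t=(n-1)^{\beta}/n^{\beta}>0$ for $n\ge2$ for any $\beta\in\R$, and $t=0$ at $n=1$. The bracket $(1-t)$ appears only to the first power, so negative differences cause no trouble. The plan is therefore to rerun the telescoping argument of Theorem~\ref{thm hardy identity} with \eqref{simplified} in place of \eqref{eq cp identity}. I would check that the remainder $t|a-1|^2$, after rescaling, coincides with $R_2$, which is the computation of Corollary~\ref{cor 1}. For $\beta<0$ one needs $\varphi(0)=0$ separately at $n=1$, and that case only involves $t=0$.

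The main obstacle is this $p=2$ extension: one must verify that no other step of the theorem's proof (for instance, the sign of $\varphi(n)-\varphi(n-1)$ in taking $(p-1)$-th powers) uses monotonicity. For $p=2$ those powers are linear, so I expect it to go through.
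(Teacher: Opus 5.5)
Your proposal is correct and follows the paper's own proof: you take $v(n)=n^{\alpha}$ and $\varphi(n)=n^{\beta}$, define $w$ by equality in \eqref{cond} (with the $n=1$ value coming from $\varphi(0)=0$), invoke the equality clause of Theorem~\ref{thm hardy identity}, and for $p=2$ replace Lemma~\ref{prop cp identity} by \eqref{simplified}, which in fact holds for every real $t$. One small slip, which does not affect the argument: in the theorem's proof the lemma is applied with $a=\psi(n)/\psi(n-1)$, where $\psi=u/\varphi$, not with $a=u(n-1)/u(n)$.
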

\begin{rem}
In the case $p=2$, dropping the last remainder term in the identity \eqref{gen gupta main sec} implies \cite[Theorem 2.1]{gupta2022discrete}. The case $\alpha=0$ and $\beta=\frac{p-1}{p}$ corresponds to the sharp form of the Fischer-Keller-Pogorzelski inequality (\ref{FKP}). Combining those two cases (i.e., $p=2$, $\alpha=0$ and $\beta=\frac{p-1}{p}$), we recover the Krej{\v{c}}i{\v{r}}{\'\i}k and \v{S}tampach identity \cite[Theorem 1]{krejcirik2022sharp}. Furthermore, by setting $\beta=-\frac{\alpha-p+1}{p}$, we obtain sharp remainder of the Barki result \cite[Theorem 4.11]{barki2024sharp} with restriction of $0\leq \alpha<p-1$.
\end{rem}

Finally, in the next result, we present the quantitative form of the discrete $p$-Hardy inequality.

\begin{thm}\label{thm hardy stability}
Let $p\geq2$. Then, for all real-valued $u \in C_c(\N_0)$ with $u(0)=0$, we have
  \begin{multline*}
    \sum_{n=1}^{\infty} |u(n) - u(n-1)|^p
    - \left(\frac{p-1}{p}\right)^p \sum_{n=1}^{\infty} \frac{|u(n)|^p}{n^p}
    \\\geq c_{1}(p)\left(\frac{p-1}{p}\right)^{p} \inf_{c \in \R} \sum_{n=3}^{\infty}
      \frac{\left|u(n) - c\,n^{\frac{p-1}{p}}\right|^p}{n^p\log^{p} (n)}.
  \end{multline*}
\end{thm}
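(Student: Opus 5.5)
We apply Theorem~\ref{thm hardy identity} with $v\equiv 1$ and $\varphi(n)=n^{\beta}$, $\beta=\frac{p-1}{p}$, i.e.\ Corollary~\ref{cor gupta} with $\alpha=0$. This gives the identity
\begin{align*}
\sum_{n=1}^{\infty}|u(n)-u(n-1)|^p=\sum_{n=1}^{\infty}w_p(n)|u(n)|^p+\sum_{n=2}^{\infty}R_p(u(n),n^{\beta}),
\end{align*}
with the Fischer--Keller--Pogorzelski weight $w_p$. By \eqref{fkp ineq weight}, $w_p(n)>\left(\frac{p-1}{p}\right)^p n^{-p}$, so the deficit on the left of Theorem~\ref{thm hardy stability} is at least $\sum_{n\ge2}R_p(u(n),n^\beta)$. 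Both summands of $R_p$ are non-negative, so we keep only the second one:
\begin{align*}
\text{deficit}\ \ge\ \sum_{n=2}^{\infty}(n-1)^{\beta}\bigl(n^{\beta}-(n-1)^{\beta}\bigr)^{p-1}\,C_p\!\left(\frac{u(n-1)}{(n-1)^\beta},\frac{u(n-1)}{(n-1)^\beta}-\frac{u(n)}{n^\beta}\right).
\end{align*}
Since $p\ge2$, Lemma~\ref{lem1} bounds each $C_p$ from below by $c_1(p)|g(n)-g(n-1)|^p$, where $g(n):=u(n)n^{-\beta}$.

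Next we estimate the coefficient. By the mean value theorem,
\[
n^\beta-(n-1)^\beta\ge \beta n^{\beta-1}
\]
because $\beta<1$. Hence the coefficient is at least $\beta^{p-1}(n-1)^{\beta}n^{(\beta-1)(p-1)}$. With $\beta=(p-1)/p$ we have $(\beta-1)(p-1)=-(p-1)/p$, so the coefficient is bounded below by roughly $\beta^{p-1}\bigl(\tfrac{n-1}{n}\bigr)^{\beta}$, a constant of order one. To produce the weight $n^{p-1}$ required by Lemma~\ref{barki almost}, we instead sum in the shifted form
\begin{align*}
\text{deficit}\ge c_1(p)\beta^{p-1}\sum_{n\ge 2}\Bigl(\tfrac{n-1}{n}\Bigr)^{\beta}|g(n)-g(n-1)|^p,
\end{align*}
and then apply Lemma~\ref{barki almost} to a reparametrized sequence.

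The main obstacle is that Lemma~\ref{barki almost} carries the weight $n^{p-1}$ while our coefficient is of order one. This indicates that the correct unknown is not $g$ itself: the weight $n^{p-1}$ should come from the Jacobian of the change of variables. We therefore recompute the coefficient exactly:
\[
\varphi(n-1)\bigl(\varphi(n)-\varphi(n-1)\bigr)^{p-1}\approx n^{\beta}\,\beta^{p-1}n^{-(p-1)/p}=\beta^{p-1}n^{(p-1)/p-(p-1)/p}.
\]
Combined with the homogeneity of $C_p$ (degree $p$), this suggests pulling out the powers of $n$ more carefully. Factoring $\varphi(n-1)^{p}$ from the arguments of $C_p$ gives the coefficient
\[
\varphi(n-1)^{1-p}(\varphi(n)-\varphi(n-1))^{p-1}\,|u(n-1)-\tfrac{\varphi(n-1)}{\varphi(n)}u(n)|^p\sim n^{1-p}\,|\cdot|^p.
\]
The intended route is then to apply Lemma~\ref{barki almost} to $g$ with the bound $|g(n)-g(n-1)|^p$ weighted appropriately. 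After checking the powers, if $\sum|\Delta g|^p n^{p-1}$ can be obtained, Lemma~\ref{barki almost} yields
\[
\inf_c\sum_{n\ge2}\frac{|g(n)-c|^p}{n\log^p n}=\inf_c\sum_{n\ge2}\frac{|u(n)-cn^\beta|^p}{n^{p}\log^pn},
\]
using $n^{\beta p}\cdot n=n^{p}$. This is exactly the target form. The index shift from $n\ge2$ to $n\ge3$ absorbs the loss in the $(n-1)/n$ comparisons, and the constant works out as $\beta^{p-1}\cdot\beta\to(\frac{p-1}{p})^p$ once the optimal mean value estimates are tracked.

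The power bookkeeping is the crux. If it comes out one power of $n$ short, the fallback is to use the first $C_p$ term of $R_p$ as well: there $\eta=\frac{\varphi(n-1)}{\varphi(n)}u(n)-u(n-1)=-\varphi(n-1)(g(n-1)-g(n))$ carries the factor $\varphi(n-1)^p=n^{p-1}$, which gives precisely $c_1(p)\,n^{p-1}|g(n)-g(n-1)|^p$ with no extra coefficient. This is likely the intended term. Taking only this term, the deficit is at least
\[
c_1(p)\sum_{n\ge2}(n-1)^{p-1}|\Delta g(n)|^p\ge c_1(p)\,2^{1-p}\!\cdots,
\]
and the comparison $(n-1)^{p-1}$ versus $n^{p-1}$ is handled by reindexing, namely by applying Lemma~\ref{barki almost} to $\tilde g(m)=g(m+1)$, which explains the start at $n=3$.
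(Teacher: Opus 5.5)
Your leading argument, which keeps only the second $C_p$-term of $R_p$, fails, and no sharper mean value estimate can repair it. As you compute yourself, its coefficient $(n-1)^{\beta}\bigl(n^{\beta}-(n-1)^{\beta}\bigr)^{p-1}$ is comparable to $n^{\beta+(\beta-1)(p-1)}=n^{0}$. So after Lemma~\ref{lem1} you control only $\sum_n|g(n)-g(n-1)|^p$ with a bounded weight, whereas Lemma~\ref{barki almost} needs the weight $n^{p-1}$.

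A bound with bounded weight is genuinely too weak. Take $g(n)=\min(n/N,1)$ on $[1,N^2]$, cut off linearly to $0$ on $[N^2,2N^2]$. Then $\sum_n|g(n)-g(n-1)|^p\approx N^{1-p}$, while $\inf_{c}\sum_n|g(n)-c|^p/(n\log^p n)\gtrsim\log^{1-p}N$.

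For the same reason, your assertion that the constant ``works out as $\beta^{p-1}\cdot\beta$'' through mean value estimates has no basis. In the correct argument the factor $\left(\frac{p-1}{p}\right)^p$ comes entirely from Lemma~\ref{barki almost}, and the only other constant is $c_1(p)$.

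What you call the fallback is the right argument and is exactly the paper's proof, so make it the proof rather than an afterthought. The steps are as follows.

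In the first $C_p$-term, the second argument equals $\varphi(n-1)\bigl(g(n)-g(n-1)\bigr)$, and $\varphi(n-1)^p=(n-1)^{p-1}$ (not $n^{p-1}$). Lemma~\ref{lem1} then gives that the deficit is at least $c_1(p)\sum_{n\ge2}(n-1)^{p-1}|g(n)-g(n-1)|^p$.

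Drop the $n=2$ term and set $\tilde g(m):=g(m+1)$. This turns the bound into exactly $c_1(p)\sum_{m\ge2}m^{p-1}|\tilde g(m)-\tilde g(m-1)|^p$, so no factor $2^{1-p}$ is needed. Lemma~\ref{barki almost} applies to the real-valued $\tilde g\in C_c(\N)$.

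You leave the last step implicit. Since $x\log^p x$ is increasing, $\frac{1}{m\log^p m}\ge\frac{1}{(m+1)\log^p(m+1)}$. After the substitution $n=m+1$ this gives $\inf_c\sum_{n\ge3}|g(n)-c|^p/(n\log^p n)$. Finally, $n^{\beta p}\cdot n=n^p$ turns this into the stated right-hand side.
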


When $p=2$, we get that $c_{1}(2)=1$, which gives the stability of the classical discrete Hardy inequality.

\begin{cor}
For all real-valued $u\in C_{c}(\mathbb{N}_{0})$ with $u(0)=0$, we have
  \begin{align*}
    \sum_{n=1}^{\infty} |u(n) - u(n-1)|^2
    - \frac{1}{4} \sum_{n=1}^{\infty} \frac{|u(n)|^2}{n^2}\geq \frac{1}{4}\inf_{c \in \R} \sum_{n=3}^{\infty}
      \frac{\left|u(n) - c\,\sqrt{n}\right|^2}{n^2\log^{2} (n)}.
  \end{align*}
\end{cor}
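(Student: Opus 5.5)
The plan is to obtain this as the case $p=2$ of Theorem~\ref{thm hardy stability}. The only thing to check is that the constants agree, namely that $c_1(2)=1$ and $\left(\frac{p-1}{p}\right)^p=\frac14$ when $p=2$.

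\textbf{Step 1: the constant from Lemma~\ref{lem1}.} For $p=2$ the numerator in the definition of $c_1(p)$ is
\begin{align*}
\bigl[t^2+s^2+2s+1\bigr]-1-2s=t^2+s^2 .
\end{align*}
So the quotient equals $1$ for every $(s,t)\neq(0,0)$, and the infimum is $c_1(2)=1$. This is consistent with a direct check from \eqref{cp formula}. For $p=2$,
\begin{align*}
C_2(\xi,\eta)=|\xi|^2-|\xi-\eta|^2-2\re\langle\xi-\eta,\eta\rangle=|\eta|^2,
\end{align*}
so the bound $C_2(\xi,\eta)\ge c_1(2)|\eta|^2$ holds with equality and constant $1$.

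\textbf{Step 2: the Hardy constant.} With $p=2$ we have $\left(\frac{p-1}{p}\right)^p=\left(\frac12\right)^2=\frac14$, and $n^{\frac{p-1}{p}}=\sqrt n$.

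\textbf{Step 3: conclusion.} Substitute these values into Theorem~\ref{thm hardy stability}, which applies since $p=2\ge 2$. Its right-hand side becomes
\begin{align*}
1\cdot\frac14\,\inf_{c\in\R}\sum_{n=3}^\infty \frac{|u(n)-c\sqrt n|^2}{n^2\log^2(n)},
\end{align*}
and its left-hand side is the deficit $\sum_{n=1}^{\infty}|u(n)-u(n-1)|^2-\frac14\sum_{n=1}^{\infty}\frac{|u(n)|^2}{n^2}$. This is exactly the claimed inequality, for all real-valued $u\in C_c(\mathbb N_0)$ with $u(0)=0$.

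There is no real obstacle: all the analytic content lies in Theorem~\ref{thm hardy stability}, and the only computation is the simplification in Step 1.
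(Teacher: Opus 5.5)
Your proposal is correct and is exactly the paper's argument: the corollary is Theorem~\ref{thm hardy stability} at $p=2$, with $c_1(2)=1$, $\left(\frac{p-1}{p}\right)^p=\frac14$ and $n^{\frac{p-1}{p}}=\sqrt{n}$. Your check that the quotient defining $c_1(2)$ is identically $1$ (equivalently $C_2(\xi,\eta)=|\eta|^2$) supplies the one computation the paper states without detail.
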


\section{Preliminary results--Proofs of Lemma \ref{prop cp identity} and Lemma \ref{barki almost}}\label{sec proofs 1}
\begin{proof}[\textbf{Proof of Lemma \ref{prop cp identity}}]
We evaluate the right-hand side of (\ref{eq cp identity}) and show it equals the left-hand side. Setting $\xi = a - t$ and $\eta = t(a-1)$ in the first $C_p$-term, we compute
\begin{align*}
\xi - \eta = (a - t) - t(a - 1) = a(1-t),
\end{align*}
so $|\xi - \eta| = (1-t)|a|$ since $t \in [0,1]$. The product evaluates as
\begin{align*}
\re\bigl(a(1-t)\cdot t(\bar{a}-1)\bigr)
= t(1-t)\bigl(|a|^2 - \re(a)\bigr),
\end{align*}
and substituting into the definition of $C_p$ yields
\begin{align}\label{eq term1}
C_p(a-t,\;t(a-1))
= |a - t|^p - (1-t)^p|a|^p - p\,t(1-t)^{p-1}|a|^{p-2}\bigl(|a|^2 - \re(a)\bigr).
\end{align}
For the second term, we set $\xi = 1$ and $\eta = 1 - a$, giving $\xi - \eta = a$ and
\begin{align*}
\re\bigl(a(1-\bar{a})\bigr) = \re(a) - |a|^2,
\end{align*}
so that
\begin{align*}
C_p(1,\;1-a) = 1 - |a|^p + p\,|a|^{p-2}\bigl(|a|^2 - \re(a)\bigr).
\end{align*}
Multiplying by $t(1-t)^{p-1}$ gives
\begin{multline}\label{eq term2}
t(1-t)^{p-1}\,C_p(1,1-a)
= t(1-t)^{p-1} - t(1-t)^{p-1}|a|^p\\ + p\,t(1-t)^{p-1}|a|^{p-2}\bigl(|a|^2 - \re(a)\bigr).
\end{multline}
Adding (\ref{eq term1}) and (\ref{eq term2}), the terms involving $p\,t(1-t)^{p-1}|a|^{p-2}(|a|^2 - \re(a))$ cancel exactly, leaving
\begin{align*}
\text{RHS}
= |a - t|^p - (1-t)^p|a|^p + t(1-t)^{p-1} - t(1-t)^{p-1}|a|^p.
\end{align*}
Grouping the terms containing $|a|^p$ and factoring, we observe
\begin{align*}
(1-t)^p + t(1-t)^{p-1} = (1-t)^{p-1}\bigl((1-t) + t\bigr) = (1-t)^{p-1},
\end{align*}
so the right-hand side simplifies to
\begin{align*}
\text{RHS} = |a-t|^p - (1-t)^{p-1}\bigl(|a|^p - t\bigr),
\end{align*}
which is exactly the left-hand side of (\ref{eq cp identity}).
\end{proof}

\begin{proof}[\textbf{Proof of Lemma \ref{barki almost}}]
First, we reduce the problem to sequences that vanish at $n=1$. Let us take a real-valued $u \in C_c(\mathbb{N})$. Since the right-hand side takes the infimum over all $c \in \mathbb{R}$, this infimum is bounded from above by evaluating the sum at the specific admissible choice $c = u(1)$. Let us define the shifted sequence $g(n) := u(n) - u(1)$ for $n \ge 1$. From the definition, $g(1) = 0$. Because $u$ has finite support, $g(n)$ is eventually constant (equal to $-u(1)$ for large $n$). Since $\sum_{n=2}^\infty \frac{1}{n \log^p n} < \infty$ for $p>1$, the series converges. Therefore, it is sufficient to prove:
\begin{align}\label{reduced eq}
\sum_{n=2}^{\infty} n^{p-1} |g(n) - g(n-1)|^{p} \geq \left(\frac{p-1}{p}\right)^{p} \sum_{n=2}^{\infty} \frac{|g(n)|^{p}}{n \log^{p}(n)}.
\end{align}
To establish \eqref{reduced eq}, we utilize the discrete version of the Picone inequality for the $p$-Laplacian (see \cite{amghibech2008discrete, brasco2014convexity, giacomoni2022discrete}): For any real $s, t \ge 0$ and any $A > B > 0$, we have
\begin{align}\label{picone}
|s - t|^p \geq s^p \left(\frac{A-B}{A}\right)^{p-1} - t^p \left(\frac{A-B}{B}\right)^{p-1}.
\end{align}
Let $\varphi(n)$ be a strictly increasing positive sequence for $n \ge 2$, with the boundary condition $\varphi(1) = 0$. For $n \ge 3$, we substitute $s = |g(n)|$, $t = |g(n-1)|$, $A = \varphi(n)$, and $B = \varphi(n-1)$. By the reverse triangle inequality, we know that 
\begin{align*}
|g(n) - g(n-1)| \geq \big| |g(n)| - |g(n-1)| \big|.
\end{align*}
Raising both sides to the power of $p$, we obtain $|g(n) - g(n-1)|^p \ge |s - t|^p$. Now using the Picone inequality (\ref{picone}) and multiplying by $n^{p-1}$, we get
\begin{align}\label{main ineq}
n^{p-1} |g(n) - g(n-1)|^p \geq |g(n)|^p \frac{n^{p-1}(\nabla \varphi(n))^{p-1}}{\varphi(n)^{p-1}} - |g(n-1)|^p \frac{n^{p-1}(\nabla \varphi(n))^{p-1}}{\varphi(n-1)^{p-1}},
\end{align}
where $\nabla \varphi(n) := \varphi(n) - \varphi(n-1)$. For $n=2$, the Picone inequality (\ref{picone}) cannot be applied directly because $\varphi(1)=0$. Nevertheless, since $g(1)=0$, the corresponding estimate is verified directly as the equality: 
\begin{align*}
2^{p-1}|g(2)-g(1)|^p = 2^{p-1}|g(2)|^p = |g(2)|^p \frac{2^{p-1}(\varphi(2)-\varphi(1))^{p-1}}{\varphi(2)^{p-1}}.
\end{align*}
Let us define the weight $h(n) := n^{p-1}(\nabla \varphi(n))^{p-1}$. Summing the inequality (\ref{main ineq}) from $n=2$ to $M$ gives
\begin{align}\label{discrete hardy form M}
\sum_{n=2}^M n^{p-1} |\nabla g(n)|^p \geq \sum_{n=2}^{M-1} |g(n)|^p \left[ \frac{h(n) - h(n+1)}{\varphi(n)^{p-1}} \right] + |g(M)|^p \frac{h(M)}{\varphi(M)^{p-1}}.
\end{align}
To achieve the inequality (\ref{reduced eq}), we now explicitly construct $h(n)$. Let $\beta = \frac{p-1}{p}$. We define $h(n)$ for $n \ge 2$ as:
\begin{align*}
h(n) := \beta^{p-1} \left( \log\left(n - \frac{1}{2}\right) \right)^{-\beta}.
\end{align*}
We define our supersolution sequence $\varphi(n)$ recursively so that its gradient matches $h(n)$. Setting $\varphi(1) = 0$, for $n \ge 2$ we define:
\begin{align*}
\nabla \varphi(n) = \left( \frac{h(n)}{n^{p-1}} \right)^{\frac{1}{p-1}} = \frac{\beta}{n \left( \log\left(n - \frac{1}{2}\right) \right)^{1/p}}.
\end{align*}
Thus, for all $n \ge 2$, $\varphi(n)$ is uniquely determined as:
\begin{align*}
\varphi(n) = \sum_{k=2}^n \frac{\beta}{k \left( \log\left(k - \frac{1}{2}\right) \right)^{1/p}}.
\end{align*}
We must bound $\varphi(n)$ strictly from above. Let $F(x) = \frac{1}{x (\log x)^{1/p}}$. 
Computing the second derivative of $F(x)$ gives:
\begin{align*}
F''(x) = \frac{1}{x^3 (\log x)^{1/p+2}} \left[ 2(\log x)^2 + \frac{3}{p} \log x + \frac{1}{p}\left(1+\frac{1}{p}\right) \right].
\end{align*}
For $x > 1$, $\log x > 0$, ensuring $F''(x) > 0$. Thus, $F(x)$ is strictly convex on $(1, \infty)$. By the Hermite-Hadamard inequality, for any strictly convex function on $[k-1, k]$, the value at the midpoint $k - 1/2$ is strictly less than the integral average. Utilizing the fact that $\frac{1}{k} < \frac{1}{k - 1/2}$, we obtain the strict bound for $k \ge 3$:
\begin{align*}
\frac{1}{k \left( \log\left(k - \frac{1}{2}\right) \right)^{1/p}} < \frac{1}{\left(k - \frac{1}{2}\right) \left( \log\left(k - \frac{1}{2}\right) \right)^{1/p}} = F\left(k - \frac{1}{2}\right) < \int_{k-1}^k F(x) dx.
\end{align*}
For the singular interval at $k=2$, we bound the integral below by the tangent line to $F(x)$ at $x=1.5$. By strict convexity, $F(x) > F(1.5) + F'(1.5)(x - 1.5)$ a.e. on $x \in (1, 2]$. Integrating this lower bound over $(1, 2]$ yields exactly the midpoint value $F(1.5)$:
\begin{align*}
\int_{1}^{2}F(x)dx > \int_{1}^{2}\left[F(1.5) + F'(1.5)(x - 1.5)\right]dx=F(1.5).
\end{align*}
Thus,
\begin{align*}
\frac{1}{2 \left( \log 1.5 \right)^{1/p}} < \frac{1}{1.5 \left( \log 1.5 \right)^{1/p}} = F(1.5) < \int_1^2 F(x) \, dx.
\end{align*}
Summing these bounds from $k=2$ to $n$:
\begin{align*}
\varphi(n) < \sum_{k=2}^n \int_{k-1}^k \frac{\beta}{x (\log x)^{1/p}} \, dx = \int_1^n \frac{\beta}{x (\log x)^{1/p}} \, dx.
\end{align*}
Because $\int \beta x^{-1}(\log x)^{-1/p} \, dx = (\log x)^{1 - 1/p} = (\log x)^\beta$, we obtain the exact upper bound:
\begin{align}\label{phi bound}
\varphi(n) < (\log n)^\beta \implies \varphi(n)^{p-1} < (\log n)^{\beta(p-1)} = (\log n)^{\frac{(p-1)^2}{p}}.
\end{align}
Next, we establish a rigorous lower bound for the strictly positive difference $h(n) - h(n+1)$:
\begin{align}\label{hn-hn+1}
h(n) - h(n+1) &= \beta^{p-1} \left[ \left( \log\left(n - \frac{1}{2}\right) \right)^{-\beta} - \left( \log\left(n + \frac{1}{2}\right) \right)^{-\beta} \right].
\end{align}
By the fundamental theorem of calculus, we have
\begin{align*}
\int_{a}^{b}f'(x)dx=f(b)-f(a).
\end{align*}
Taking $f:=(\log x)^{-\beta}$, we get
\begin{align*}
f(n-1/2) - f(n+1/2) = -\int_{n-1/2}^{n+1/2} \left( -\frac{\beta}{x(\log x)^{\beta+1}} \right) dx = \int_{n-1/2}^{n+1/2} \frac{\beta}{x(\log x)^{\beta+1}}dx,
\end{align*}
which means that
\begin{align*}
\left( \log\left(n - \frac{1}{2}\right) \right)^{-\beta} - \left( \log\left(n + \frac{1}{2}\right) \right)^{-\beta} = \int_{n-1/2}^{n+1/2} \frac{\beta}{x(\log x)^{\beta+1}}dx.
\end{align*}
Using this in (\ref{hn-hn+1}), we obtain
\begin{align*}
h(n) - h(n+1) = \beta^{p-1} \int_{n-1/2}^{n+1/2} \frac{\beta}{x (\log x)^{\beta+1}} dx = \beta^p \int_{n-1/2}^{n+1/2} \frac{1}{x (\log x)^{\beta+1}} dx.
\end{align*}
Let $G(x) = \frac{1}{x (\log x)^{\beta+1}}$. Computing its second derivative gives:
\begin{align*}
G''(x) = \frac{1}{x^3 (\log x)^{\beta+3}} \left[ 2(\log x)^2 + 3(\beta+1) \log x + (\beta+1)(\beta+2) \right].
\end{align*}
Since $\beta > 0$ and $\log x > 0$ for $x > 1$, we have $G''(x) > 0$, making $G(x)$ strictly convex on $(1, \infty)$. Applying the Hermite-Hadamard inequality over $[n - \frac{1}{2}, n + \frac{1}{2}]$ (where the midpoint is exactly $n$), the integral strictly dominates the midpoint evaluation:
\begin{align*}
\int_{n-1/2}^{n+1/2} G(x) \, dx > G(n) = \frac{1}{n (\log n)^{\beta+1}}.
\end{align*}
Since $\beta + 1 = \frac{p-1}{p} + 1 = \frac{2p-1}{p}$, we obtain the exact lower bound:
\begin{align*}
h(n) - h(n+1) > \frac{\beta^p}{n (\log n)^{\frac{2p-1}{p}}}.
\end{align*}
Now we combine our bounds. Since $\varphi(n)^{p-1} < (\log n)^{\frac{(p-1)^2}{p}}$ by (\ref{phi bound}), we have:
\begin{align*}
\frac{h(n) - h(n+1)}{\varphi(n)^{p-1}} &> \frac{\frac{\beta^p}{n (\log n)^{\frac{2p-1}{p}}}}{(\log n)^{\frac{(p-1)^2}{p}}} = \frac{\beta^p}{n (\log n)^{\frac{2p-1}{p} + \frac{(p-1)^2}{p}}}.
\end{align*}
Simplifying the exponent of the logarithm gives $\frac{2p-1 + p^2 - 2p + 1}{p} = \frac{p^2}{p} = p$. Thus, for every $n \ge 2$, we have that
\begin{align*}
\frac{h(n) - h(n+1)}{\varphi(n)^{p-1}} > \left(\frac{p-1}{p}\right)^p \frac{1}{n \log^p n}.
\end{align*}
Substituting this bound into \eqref{discrete hardy form M} gives:
\begin{align}\label{huang-ye any p M}
\sum_{n=2}^{M} n^{p-1} |g(n) - g(n-1)|^p \geq \left(\frac{p-1}{p}\right)^{p} \sum_{n=2}^{M-1} \frac{|g(n)|^{p}}{n \log^{p}(n)} + |g(M)|^{p}\frac{h(M)}{\varphi(M)^{p-1}}.
\end{align}
Because $h(M) > 0$ and $\varphi(M) > 0$, the boundary term at $M$ is non-negative and can be safely dropped. Letting $M \to \infty$ gives:
\begin{align}\label{discrete hardy form}
\sum_{n=2}^\infty n^{p-1} |g(n)-g(n-1)|^p \geq \left(\frac{p-1}{p}\right)^{p} \sum_{n=2}^{\infty} \frac{|g(n)|^{p}}{n \log^{p}(n)}.
\end{align}
Since $g(n) := u(n)-u(1)$ and taking the infimum of the right-hand side over all possible constants $c \in \mathbb{R}$ only creates a value less than or equal to the evaluation at $c = u(1)$, we finally get
\begin{align*}
\sum_{n=2}^{\infty} |u(n) - u(n-1)|^{p} n^{p-1}  \geq \left(\frac{p-1}{p}\right)^{p} \inf_{c \in \mathbb{R}}\sum_{n=2}^{\infty} \frac{|u(n)-c|^{p}}{n \log^{p}(n)},
\end{align*}
which completes the proof.
\end{proof}

\section{The general weighted discrete $p$-Hardy inequality and Stability--Proofs of Theorem \ref{thm hardy identity} and Theorem \ref{thm hardy stability}}\label{sec proof hardy}\label{sec proofs 2}

\begin{proof}[\textbf{Proof of Theorem \ref{thm hardy identity}}]
Let $X, Y \in \C$ with $Y \neq 0$ and $t \in [0,1]$. Substituting $a = X/Y$ into (\ref{eq cp identity}) and multiplying by $|Y|^p$, we use the $p$-homogeneity of $C_p$ to obtain the exact identity
\begin{multline}\label{eq homogenized}
|X - tY|^p
= (1-t)^{p-1}\bigl(|X|^p - t|Y|^p\bigr)
+ C_p\bigl(X - tY,\;t(X-Y)\bigr)
\\+ t(1-t)^{p-1}\,C_p(Y,\;Y-X).
\end{multline}
Indeed, the left-hand side of (\ref{eq cp identity}) becomes $|X - tY|^p - (1-t)^{p-1}(|X|^p - t|Y|^p)$ after scaling, while on the right-hand side homogeneity converts $C_p(X/Y - t,\;t(X/Y - 1))$ into $|Y|^{-p}\,C_p(X - tY,\;t(X-Y))$ and $C_p(1,\;1-X/Y)$ into $|Y|^{-p}\,C_p(Y,\;Y-X)$. Thus, for $Y \neq 0$, we obtain \eqref{eq homogenized}. If $Y=0$, identity \eqref{eq homogenized} follows directly from the definition of $C_p$ (\ref{cp formula}). Hence it actually holds for all $X, Y \in \mathbb{C}$. Now define $\psi(n) := u(n)/\varphi(n)$ for $n \in \N$ and $\psi(0) := 0$. Since $\varphi$ is positive and non-decreasing on $\N$, the ratio $t_n := \varphi(n-1)/\varphi(n)$ lies in $[0,1]$ for every $n \geq 1$. Applying (\ref{eq homogenized}) with $X = \psi(n)$, $Y = \psi(n-1)$ and $t = t_n$ for $n\geq2$, and treating the case
$n=1$ separately using $\psi(0)=0$ and $t_1=0$, then multiplying the entire identity by $\varphi(n)^p$, the left-hand side becomes
\begin{align*}
\varphi(n)^p\left|\psi(n) - \frac{\varphi(n-1)}{\varphi(n)}\psi(n-1)\right|^p
= |u(n) - u(n-1)|^p.
\end{align*}
The first term on the right yields
\begin{align*}
\bigl(\varphi(n) - \varphi(n-1)\bigr)^{p-1}\Bigl(\varphi(n)\,|\psi(n)|^p - \varphi(n-1)\,|\psi(n-1)|^p\Bigr).
\end{align*}
For the two $C_p$ terms, distributing $\varphi(n)$ into the arguments via homogeneity produces exactly $R_p(u(n),\varphi(n))$ as defined in (\ref{eq remainder}). To see this, we first work with the first $C_p$ term:
\begin{align*}
\varphi(n)^p &\,C_p\bigl(\psi(n) - t_n\psi(n-1),\; t_n(\psi(n) - \psi(n-1))\bigr) \\
&= C_p\bigl(\varphi(n)\psi(n) - \varphi(n)t_n\psi(n-1),\; \varphi(n)t_n(\psi(n) - \psi(n-1))\bigr) \\
&= C_p\!\left(u(n) - \varphi(n-1)\frac{u(n-1)}{\varphi(n-1)},\;
   \varphi(n-1)\!\left(\frac{u(n)}{\varphi(n)} - \frac{u(n-1)}{\varphi(n-1)}\right)\right) \\
&= C_p\!\left(u(n) - u(n-1),\;
   \frac{\varphi(n-1)}{\varphi(n)}\,u(n) - u(n-1)\right).
\end{align*}
For the second term:
\begin{align*}
\varphi(n)^p &\,t_n(1-t_n)^{p-1}\,C_p\bigl(\psi(n-1),\; \psi(n-1) - \psi(n)\bigr) \\
&= \varphi(n)^p \left(\frac{\varphi(n-1)}{\varphi(n)}\right)
   \left(1 - \frac{\varphi(n-1)}{\varphi(n)}\right)^{p-1}
   C_p\!\left(\frac{u(n-1)}{\varphi(n-1)},\;
   \frac{u(n-1)}{\varphi(n-1)} - \frac{u(n)}{\varphi(n)}\right) \\
&= \varphi(n-1)\bigl(\varphi(n) - \varphi(n-1)\bigr)^{p-1}\,
   C_p\!\left(\frac{u(n-1)}{\varphi(n-1)},\;
   \frac{u(n-1)}{\varphi(n-1)} - \frac{u(n)}{\varphi(n)}\right).
\end{align*}
Assembling, we arrive at the identity
\begin{multline}\label{eq pointwise}
|u(n) - u(n-1)|^p
= \bigl(\varphi(n) - \varphi(n-1)\bigr)^{p-1}\Bigl(\varphi(n)\,|\psi(n)|^p - \varphi(n-1)\,|\psi(n-1)|^p\Bigr) \\+ R_p(u(n),\varphi(n))
\end{multline}
for all $n\in \mathbb{N}$, where $\psi(0):=0$ and $R_p(u(1),\varphi(1)):=0$. Multiplying (\ref{eq pointwise}) by $v(n)$ and summing over $n$, it remains to evaluate the sum
\begin{align}\label{eq before split}
S := \sum_{n=1}^{\infty} v(n)\bigl(\varphi(n) - \varphi(n-1)\bigr)^{p-1}\Bigl(\varphi(n)\,|\psi(n)|^p - \varphi(n-1)\,|\psi(n-1)|^p\Bigr).
\end{align}
Writing $\alpha(n) := v(n)(\varphi(n) - \varphi(n-1))^{p-1}$ for brevity, we split
\begin{align}\label{eq split}
S = \sum_{n=1}^{\infty} \alpha(n)\,\varphi(n)\,|\psi(n)|^p
\;-\; \sum_{n=1}^{\infty} \alpha(n)\,\varphi(n-1)\,|\psi(n-1)|^p.
\end{align}
Note that since $u(n)$ is in $C_{c}(\mathbb{N}_0)$ and $\psi(n) = u(n)/\varphi(n)$, both series in (\ref{eq before split}) and (\ref{eq split}) converge. In the second sum, shifting the index by $m = n - 1$ and using that $\psi(0) = 0$ and $u$ has finite support (so all boundary terms vanish) gives
\begin{align*}
\sum_{n=1}^{\infty} \alpha(n)\,\varphi(n-1)\,|\psi(n-1)|^p
= \sum_{n=1}^{\infty} v(n+1)\bigl(\varphi(n+1) - \varphi(n)\bigr)^{p-1}\varphi(n)\,|\psi(n)|^p.
\end{align*}
Combining and factoring out $\varphi(n)\,|\psi(n)|^p = |u(n)|^p / \varphi(n)^{p-1}$ yields
\begin{align*}
S = \sum_{n=1}^{\infty}\frac{v(n)\bigl(\varphi(n) - \varphi(n-1)\bigr)^{p-1} - v(n+1)\bigl(\varphi(n+1) - \varphi(n)\bigr)^{p-1}}{\varphi(n)^{p-1}}\;|u(n)|^p.
\end{align*}
Finally, using the assumption \eqref{cond}:
\begin{align}\label{cond proof}
v(n)\bigl(\varphi(n) - \varphi(n-1)\bigr)^{p-1} - v(n+1)\bigl(\varphi(n+1) - \varphi(n)\bigr)^{p-1}\geq w(n)\varphi(n)^{p-1},
\end{align}
we obtain 
\begin{align}\label{identity proof}
\sum_{n=1}^{\infty} v(n)|u(n) - u(n-1)|^p 
\geq \sum_{n=1}^{\infty} w(n)|u(n)|^p + \sum_{n=2}^{\infty} R_p(u(n),\varphi(n))v(n).
\end{align}
Since the identity is preserved throughout until assumption (\ref{cond proof}) is used, it follows that if (\ref{cond proof}) holds with equality, then so does (\ref{identity proof}). Also, since the $C_p$-functional is non-negative and the remainder $R_p(u(n),\varphi(n))$ is defined as a sum of two non-negative terms, it vanishes if and only if both $C_p$ terms are simultaneously zero. However, we know that $C_p(\xi, \eta) = 0$ if and only if $\eta = 0$. Setting the second argument of the second $C_p$ term in \eqref{eq remainder} to zero yields
\begin{align*}
\frac{u(n-1)}{\varphi(n-1)} - \frac{u(n)}{\varphi(n)} = 0,
\end{align*}
which implies that the ratio $\psi(n) = u(n)/\varphi(n)$ must be a constant $c \in \mathbb{C}$ for all $n \geq 1$. Thus, $u(n) = c\varphi(n)$. Substituting this into the second argument of the first $C_p$ term gives
\begin{align*}
\frac{\varphi(n-1)}{\varphi(n)} c \varphi(n) - c \varphi(n-1) = 0,
\end{align*}
which confirms that the first $C_p$ term also evaluates to zero. Consequently, the remainder term vanishes formally if and only if $u(n) = c\varphi(n)$. This completes the proof.
\end{proof}

\begin{proof}[\textbf{Proof of Theorem \ref{thm hardy stability}}]
Let us recall the discrete Hardy identity (\ref{gen gupta main sec}) that we have established through our general result:
\begin{align*}
\sum_{n=1}^{\infty} n^{\alpha}|u(n) - u(n-1)|^p 
= \sum_{n=1}^{\infty}w_{p,\alpha,\beta}(n)|u(n)|^{p}+\sum_{n=2}^{\infty}n^{\alpha}R_{p}\left(u(n), \varphi_{\beta}(n)\right),
\end{align*}
where
\begin{align*}
w_{p,\alpha,\beta}=n^{\alpha} \left[\left(1 - \left(1 - \frac{1}{n}\right)^{\beta}\right)^{p-1}
- \left(1 + \frac{1}{n}\right)^{\alpha} \left(\left(1 + \frac{1}{n}\right)^{\beta} - 1\right)^{p-1}\right]
\end{align*}
and
\begin{multline}\label{stability rp}
R_p(u(n),\varphi_{\beta}(n))
= C_p\!\left(u(n) - u(n-1),\;\frac{\varphi_{\beta}(n-1)}{\varphi_{\beta}(n)}\,u(n) - u(n-1)\right)  
\\ + \varphi_{\beta}(n-1)\bigl(\varphi_{\beta}(n) - \varphi_{\beta}(n-1)\bigr)^{p-1}\,C_p\!\left(\frac{u(n-1)}{\varphi_{\beta}(n-1)},\;\frac{u(n-1)}{\varphi_{\beta}(n-1)} - \frac{u(n)}{\varphi_{\beta}(n)}\right).
\end{multline}
As we noted before, taking $\alpha=0$ and $\beta=\frac{p-1}{p}$, we get the sharp remainder formula of the Fischer-Keller-Pogorzelski inequality:
\begin{align*}
\sum_{n=1}^{\infty} |u(n) - u(n-1)|^p 
= \sum_{n=1}^{\infty}w_{p}(n)|u(n)|^{p}+\sum_{n=2}^{\infty}R_{p}\left(u(n), \varphi_{p}(n)\right),
\end{align*}
where $\varphi_p(n) := n^{\frac{p-1}{p}}$. Utilizing the inequality (\ref{fkp ineq weight}), we obtain
\begin{multline}\label{last ser}
\sum_{n=1}^{\infty} |u(n) - u(n-1)|^{p}
- \left(\frac{p-1}{p}\right)^{p} \sum_{n=1}^{\infty} \frac{|u(n)|^{p}}{n^{p}}
\\= \sum_{n=1}^{\infty} \left(w_p(n) - w_p^{H}(n)\right) |u(n)|^{p}
+ \sum_{n=2}^{\infty} R_p(u(n), \varphi_{p}(n)).
\end{multline}
Now dropping the first series on the RHS of (\ref{last ser}), the last $C_{p}$-term in (\ref{stability rp}) and using Lemma \ref{lem1}, we get that for $p\geq 2$, we have
\begin{multline}\label{stab mid}
\sum_{n=1}^{\infty} |u(n) - u(n-1)|^{p}
- \left(\frac{p-1}{p}\right)^{p} \sum_{n=1}^{\infty} \frac{|u(n)|^{p}}{n^{p}}
\\\geq c_1(p) \sum_{n=2}^{\infty} (n-1)^{p-1} \left|\frac{u(n)}{n^{\frac{p-1}{p}}}-\frac{u(n-1)}{(n-1)^{\frac{p-1}{p}}}\right|^{p}.
\end{multline}
Let us define 
\begin{align}\label{defs}
S:=c_1(p) \sum_{n=2}^{\infty} (n-1)^{p-1} \left|f(n)-f(n-1)\right|^{p} \quad \text{with} \quad f(n):=\frac{u(n)}{n^{\frac{p-1}{p}}} \quad \text{and} \quad f(0):=0.
\end{align}
Let us isolate the first term of the series $S$:
\begin{align*}
S=c_1(p)|f(2)-f(1)|^{p}+c_1(p)\sum_{n=3}^{\infty} (n-1)^{p-1} \left|f(n)-f(n-1)\right|^{p}.
\end{align*}
Dropping the first non-negative term and shifting the index, we get
\begin{align}\label{s geq 1}
S\geq c_1(p)\sum_{n=2}^{\infty} n^{p-1} \left|f(n+1)-f(n)\right|^{p}.
\end{align}
Here, we define a new shifted sequence $g(n):=f(n+1)$ for $n\geq1$. Since $f$ has finite support, so does $g$. Applying Lemma \ref{barki almost} for the sequence $g$, we derive:
\begin{align*}
\sum_{n=2}^{\infty} n^{p-1} |g(n) - g(n-1)|^{p}
\geq \left(\frac{p-1}{p}\right)^{p} \inf_{c \in \mathbb{R}} \sum_{n=2}^{\infty} \frac{|g(n) - c|^{p}}{n \log^{p}(n)}.
\end{align*}
Substituting $g(n) = f(n+1)$ back, we get:
\begin{align}\label{stab last proof}
\sum_{n=2}^{\infty} n^{p-1} |f(n+1) - f(n)|^{p}
&\geq \left(\frac{p-1}{p}\right)^{p} \inf_{c \in \mathbb{R}} \sum_{n=2}^{\infty} \frac{|f(n+1) - c|^{p}}{n \log^{p}(n)} \nonumber
\\&\geq \left(\frac{p-1}{p}\right)^{p} \inf_{c \in \mathbb{R}} \sum_{n=2}^{\infty} \frac{|f(n+1) - c|^{p}}{(n+1) \log^{p}(n+1)}.
\end{align}
Putting (\ref{stab mid})-(\ref{stab last proof}) together, we obtain 
\begin{multline*}
\sum_{n=1}^{\infty} |u(n) - u(n-1)|^{p}
- \left(\frac{p-1}{p}\right)^{p} \sum_{n=1}^{\infty} \frac{|u(n)|^{p}}{n^{p}}\\\geq c_{1}(p)\left(\frac{p-1}{p}\right)^{p}\inf_{c \in \mathbb{R}} \sum_{n=2}^{\infty} \frac{\left|\frac{u(n+1)}{(n+1)^{\frac{p-1}{p}}} - c\right|^{p}}{(n+1) \log^{p}(n+1)}
\end{multline*}
Finally, simplifying and shifting the index one last time, we have
\begin{multline*}
\sum_{n=1}^{\infty} |u(n) - u(n-1)|^{p}
- \left(\frac{p-1}{p}\right)^{p} \sum_{n=1}^{\infty} \frac{|u(n)|^{p}}{n^{p}}\\\geq c_{1}(p)\left(\frac{p-1}{p}\right)^{p}\inf_{c \in \mathbb{R}} \sum_{n=3}^{\infty} \frac{\left|u(n)-c\,n^{\frac{p-1}{p}}\right|^{p}}{n^{p}\log^{p}(n)}.
\end{multline*}
The proof is complete.
\end{proof}

\section{Some consequences of the main results--Proofs of Corollaries \ref{cor 1}, \ref{cor copson} and \ref{cor gupta}}\label{sec proofs 3}

\begin{proof}[\textbf{Proof of Corollary \ref{cor 1}}]
When $p=2$, in the proof of Theorem \ref{thm hardy identity}, we can drop the non-decreasing assumption on $\varphi$ (see Remark \ref{rem lem}). Thus, under this substitution, we get
\begin{align}\label{eq proof cor 1}
\sum_{n=1}^{\infty} v(n)|u(n) - u(n-1)|^2
= \sum_{n=1}^{\infty} w(n)|u(n)|^2 + \sum_{n=2}^{\infty} v(n)R_2(u(n),\varphi(n)),
\end{align}
where we can take
\begin{align*}
w(n)=\frac{v(n)\bigl(\varphi(n) - \varphi(n-1)\bigr) - v(n+1)\bigl(\varphi(n+1) - \varphi(n)\bigr)}{\varphi(n)}.
\end{align*}
Since $C_{2}(\xi,\eta)=|\eta|^{2}$, we have
\begin{align*}
R_2(u(n),\varphi(n))
&= \left|\frac{\varphi(n-1)}{\varphi(n)}u(n) - u(n-1)\right|^{2}
\\& \quad + \varphi(n-1)\bigl(\varphi(n) - \varphi(n-1)\bigr)\left|\frac{u(n)}{\varphi(n)}-\frac{u(n-1)}{\varphi(n-1)}\right|^{2}
\\&  = (\varphi(n-1))^{2}\left|\frac{u(n)}{\varphi(n)}-\frac{u(n-1)}{\varphi(n-1)}\right|^{2}
\\& \quad  + \varphi(n-1)\bigl(\varphi(n) - \varphi(n-1)\bigr)\left|\frac{u(n)}{\varphi(n)}-\frac{u(n-1)}{\varphi(n-1)}\right|^{2}
\\& = \varphi(n-1)\varphi(n)\left|\frac{u(n)}{\varphi(n)}-\frac{u(n-1)}{\varphi(n-1)}\right|^{2}.
\end{align*}
Using the fact that $\varphi(0)=0$ and shifting the last series on the right-hand side of (\ref{eq proof cor 1}), we get 
\begin{align*}
\sum_{n=2}^{\infty} v(n)R_2(u(n),\varphi(n)) &=\sum_{n=2}^{\infty}v(n)\varphi(n-1)\varphi(n)\left|\frac{u(n)}{\varphi(n)}-\frac{u(n-1)}{\varphi(n-1)}\right|^{2}
\\&=\sum_{n=1}^{\infty}v(n+1)\varphi(n)\varphi(n+1)\left|\frac{u(n+1)}{\varphi(n+1)}-\frac{u(n)}{\varphi(n)}\right|^{2}
\\&=\sum_{n=1}^{\infty}v(n+1)\left|\sqrt{\frac{\varphi(n)}{\varphi(n+1)}}u(n+1)-\sqrt{\frac{\varphi(n+1)}{\varphi(n)}}u(n)\right|^{2}.
\end{align*}
This completes the proof.
\end{proof}

\begin{proof}[\textbf{Proof of Corollary \ref{cor copson}}]
Let
\begin{align*}
v(n) := \begin{cases}
  0, & n = 1, \\
  (n-1)^{\alpha}, & n \geq 2,
\end{cases}
\end{align*}
and
\begin{align*}
\varphi(n)=\varphi_{Cop}(n) := \begin{cases}
  0, & n = 0, \\
  \frac{\Gamma\left(n + 1 - \frac{\alpha+1}{p}\right)}{\Gamma(n)}, & n \geq 1.
\end{cases}
\end{align*}
Define the shifted sequence
\begin{align*}
    U(0) := 0, \quad U(n) := u(n-1), \quad n \ge 1.
\end{align*}
Since $u(0) = u(1) = 0$, we have
\begin{align*}
    U(0) = U(1) = U(2) = 0.
\end{align*}
Applying Theorem \ref{thm hardy identity} to $U$, $v$, and $\varphi_{Cop}$, we obtain
\begin{multline*}
    \sum_{n=2}^\infty (n-1)^\alpha |U(n) - U(n-1)|^p \ge \sum_{n=2}^\infty w(n) |U(n)|^p \\
    + \sum_{n=2}^\infty (n-1)^\alpha R_p(U(n), \varphi_{Cop}(n)),
\end{multline*}
where
\begin{align*}
    w(n) = \left( \frac{p-\alpha-1}{p} \right)^{p-1} n^{\alpha-p+1} H\left( \frac{1}{n} \right)
\end{align*}
and
\begin{align*}
    H(x) = (1-x)^\alpha \left( 1 - \frac{\alpha+1}{p}x \right)^{1-p} - 1.
\end{align*}
To obtain \eqref{copson ineq}, it is enough to show that
\begin{align}\label{cop proof}
    H(x) \geq \frac{p-\alpha-1}{p}x \quad \text{for every } x \in [0, 1).
\end{align}
Indeed, since
\begin{align*}
    H(0) = 0, \quad H'(0) = \frac{p-\alpha-1}{p},
\end{align*}
we have that \eqref{cop proof} is equivalent to $H(x) \geq H(0) + H'(0)x$. So, to show \eqref{cop proof}, we just show $H(x)$ is strictly convex on $(0,1)$. Calculating the second derivative, we get
\begin{multline*}
    H''(x) = \frac{p - \alpha - 1}{p^2} \frac{(1-x)^{\alpha-2}}{\left(1 - \frac{\alpha+1}{p}\,x\right)^{p+1}}
    \biggl[(p - \alpha)(1 + \alpha)^2 x^2 \\- 2(1 + \alpha)(p - \alpha)x + p(1 - \alpha)\biggr].
\end{multline*}
Since $\alpha < 0$, the discriminant
\[
    4\alpha(1+\alpha)^2(p-1)(p-\alpha)
\]
is non-positive. If $\alpha \neq -1$, it is strictly negative and the leading coefficient is positive. If $\alpha = -1$, the polynomial reduces to the positive constant $2p$. Hence the polynomial is positive for every $x \in (0,1)$. Therefore, $H(x)$ is strictly convex on $(0,1)$ and
\begin{multline*}
    \sum_{n=2}^\infty (n-1)^\alpha |U(n) - U(n-1)|^p \ge \left( \frac{p-\alpha-1}{p} \right)^p \sum_{n=2}^\infty n^{\alpha-p} |U(n)|^p \\
    + \sum_{n=2}^\infty (n-1)^\alpha R_p(U(n), \varphi_{Cop}(n)).
\end{multline*}
Now set $m = n-1$. Since 
\begin{align*}
    U(n) = u(n-1) = u(m), \quad U(n-1) = u(m-1),
\end{align*}
we have
\begin{align*}
    \sum_{n=2}^\infty (n-1)^\alpha |U(n) - U(n-1)|^p = \sum_{m=1}^\infty m^\alpha |u(m) - u(m-1)|^p,
\end{align*}
and
\begin{align*}
    \sum_{n=2}^\infty n^{\alpha-p} |U(n)|^p = \sum_{m=1}^\infty (m+1)^{\alpha-p} |u(m)|^p.
\end{align*}
Define
\begin{align*}
    \hat{\varphi}_{Cop}(0) := 0, \quad \hat{\varphi}_{Cop}(m) := \varphi_{Cop}(m+1), \quad m \ge 1.
\end{align*}
For $n \ge 3$, the definition of $R_p$ gives
\begin{align*}
    R_p(U(n), \varphi_{Cop}(n)) = R_p(u(m), \hat{\varphi}_{Cop}(m)).
\end{align*}
The term corresponding to $n=2$ vanishes because $U(1) = U(2) = 0$. Hence
\begin{align*}
    \sum_{n=2}^\infty (n-1)^\alpha R_p(U(n), \varphi_{Cop}(n)) = \sum_{m=2}^\infty m^\alpha R_p(u(m), \hat{\varphi}_{Cop}(m)).
\end{align*}
Combining the preceding identities yields
\begin{multline}\label{copson final}
    \sum_{n=1}^\infty n^\alpha |u(n) - u(n-1)|^p \ge \left( \frac{p-\alpha-1}{p} \right)^p \sum_{n=1}^\infty (n+1)^{\alpha-p} |u(n)|^p \\
    + \sum_{n=2}^\infty n^\alpha R_p(u(n), \hat{\varphi}_{Cop}(n)),
\end{multline}
where the constant $\left( \frac{p-\alpha-1}{p} \right)^p $ in (\ref{copson final}) is optimal, since the same constant is optimal in the corresponding inequality without the remainder term (see, \cite[Theorem 3]{stampach2026hardy}).
\end{proof}

\begin{proof}[\textbf{Proof of Corollary \ref{cor gupta}}]
Choosing 
\begin{align*}
v(n)=n^{\alpha} \quad \text{and} \quad \varphi(n)=\varphi_{\beta}(n)=n^{\beta} \quad \text{for} \quad \alpha \in \mathbb{R} \quad \text{and} \quad \beta> 0
\end{align*}
and substituting into the condition (\ref{cond}), we get
\begin{align*}
w(n) &= \frac{n^{\alpha}\bigl(n^{\beta} - (n-1)^{\beta}\bigr)^{p-1} - (n+1)^{\alpha}\bigl((n+1)^{\beta} - n^{\beta}\bigr)^{p-1}}{n^{\beta(p-1)}} \\
&= n^{\alpha} \left(\frac{n^{\beta} - (n-1)^{\beta}}{n^{\beta}}\right)^{p-1} - (n+1)^{\alpha} \left(\frac{(n+1)^{\beta} - n^{\beta}}{n^{\beta}}\right)^{p-1} \\
&= n^{\alpha} \left(1 - \left(1 - \frac{1}{n}\right)^{\beta}\right)^{p-1} - n^{\alpha}\left(\frac{n+1}{n}\right)^{\alpha} \left(\left(1 + \frac{1}{n}\right)^{\beta} - 1\right)^{p-1} \\
&= n^{\alpha} \left[\left(1 - \left(1 - \frac{1}{n}\right)^{\beta}\right)^{p-1} - \left(1 + \frac{1}{n}\right)^{\alpha} \left(\left(1 + \frac{1}{n}\right)^{\beta} - 1\right)^{p-1}\right]=w_{p,\alpha,\beta}(n)
\end{align*}
for $n\geq2$ and $w_{p,\alpha,\beta}(1):=1-2^{\alpha}\left(2^{\beta}-1\right)^{p-1}$. This matches the formula for $w_{p,\alpha,\beta}(n)$. Since the weight $w(n)$ was constructed so that the condition (\ref{cond}) holds as an equality for all $n \in \mathbb{N}$, the final statement of Theorem \ref{thm hardy identity} gives that the inequality (\ref{eq hardy identity}) becomes an identity. This proves \eqref{gen gupta main sec} for $\beta>0$.

When $p=2$, Remark \ref{rem lem} shows that the monotonicity assumption on $\phi_\beta$ is not required. Hence the same argument extends identity \eqref{gen gupta main sec} to every $\beta\in\mathbb R$, where, for $\beta\leq0$, the weight at $n=1$ is defined by
$$
w_{2,\alpha,\beta}(1)=1-2^\alpha(2^\beta-1),
$$
while the formula for $w_{2,\alpha,\beta}(n)$ is used for $n\geq2$. This completes the proof.
\end{proof}

{\bf Acknowledgments.} We are grateful to Shubham Gupta for his valuable comments which helped to improve the manuscript.

\bibliographystyle{alpha}
\bibliography{citation}

@article{kufner2006prehistory,
  title={The prehistory of the {H}ardy inequality},
  author={Kufner, A. and Maligranda, L. and Persson, L. E.},
  journal={Amer. Math. Monthly},
  volume={113},
  number={8},
  pages={715--732},
  year={2006},
  publisher={Taylor \& Francis}
}

@article{bennett1987elementary,
  title={Some elementary inequalities},
  author={Bennett, G.},
  journal={Quart. J. Math.},
  volume={38},
  number={4},
  pages={401--425},
  year={1987}
}

@article{giacomoni2022discrete,
  title={Discrete {P}icone inequalities and applications to non local and non homogeneous operators},
  author={Giacomoni, J. and Gouasmia, A. and Mokrane, A.},
  journal={Rev. R. Acad. Cienc. Exactas F{\'\i}s. Nat. Ser. A Mat. RACSAM},
  volume={116},
  pages={100},
  year={2022},
  publisher={Springer}
}

@article{brasco2014convexity,
  title={Convexity properties of {D}irichlet integrals and {P}icone-type inequalities},
  author={Brasco, L. and Franzina, G.},
  journal={Kodai Math. J.},
  volume={37},
  pages={769--799},
  year={2014}
}

@article{das2023improvements,
  title={On the improvements of {H}ardy and {C}opson inequalities},
  author={Das, B. and Manna, A.},
  journal={Rev. R. Acad. Cienc. Exactas F\'is. Nat. Ser. A Mat.},
  volume={117},
  number={2},
  pages={92},
  year={2023}
}

@article{amghibech2008discrete,
  title={On the discrete version of {P}icone's identity},
  author={Amghibech, S.},
  journal={Discrete Appl. Math.},
  volume={156},
  number={1},
  pages={1--10},
  year={2008},
  publisher={Elsevier}
}

@article{das2025improved,
  title={An improved {C}opson inequality},
  author={Das, B. and Manna, A.},
  journal={arXiv preprint arXiv:2508.00388},
  year={2025}
}

@article{das2025improvement,
  title={Improvement of the discrete weighted variant {H}ardy's inequality and criticality of the improved weight},
  author={Das, B. and Manna, A. and Paul, T.},
  journal={J. Geom. Anal.},
  volume={35},
  number={12},
  pages={376},
  year={2025}
}

@article{sano2017scale,
  title={Scale invariance structures of the critical and the subcritical {H}ardy inequalities and their improvements},
  author={Sano, M. and Takahashi, F.},
  journal={Calc. Var. Partial Differential Equations},
  volume={56},
  number={3},
  pages={69},
  year={2017}
}

@article{ioku2017hardy,
  title={{H}ardy type inequalities in $L^p$ with sharp remainders},
  author={Ioku, N. and Ishiwata, M. and Ozawa, T.},
  journal={J. Inequal. Appl.},
  volume={2017},
  number={1},
  pages={5},
  year={2017}
}

@article{sano2018scaling,
  title={Scaling invariant {H}ardy type inequalities with non-standard remainder terms},
  author={Sano, M.},
  journal={Math. Inequal. Appl.},
  volume={21},
  number={1},
  pages={77--90},
  year={2018}
}

@article{bez2018stability,
  title={Stability of trace theorems on the sphere},
  author={Bez, N. and Jeavons, C. and Ozawa, T. and Sugimoto, M.},
  journal={J. Geom. Anal.},
  volume={28},
  number={2},
  pages={1456--1476},
  year={2018}
}

@article{sano2018improvements,
  title={Some improvements for a class of the {C}affarelli--{K}ohn--{N}irenberg inequalities},
  author={Sano, M. and Takahashi, F.},
  journal={Differential Integral Equations},
  volume={31},
  number={1-2},
  pages={57--74},
  year={2018}
}

@article{yessirkegenov2026stability,
  title={Stability of the ${L}^{p}$-{P}oincar\'e inequality for the {L}ebesgue and {G}aussian probability measures with explicit geometric dependence and applications to spectral gaps},
  author={Yessirkegenov, N. and Zhangirbayev, A.},
  journal={arXiv preprint arXiv:2602.05968},
  year={2026}
}

@article{huang2025lp,
  title={On ${L}^p$-{H}ardy inequalities for magnetic $p$-{L}aplacians},
  author={Huang, Y. C. and Tong, X.},
  journal={arXiv preprint arXiv:2508.09483},
  year={2025}
}

@article{cazacu2024hardy,
  title={Hardy inequalities for magnetic $p$-{L}aplacians},
  author={Cazacu, C. and Krej{\v{c}}i{\v{r}}{\'\i}k, D. and Lam, N. and Laptev, A.},
  journal={Nonlinearity},
  volume={37},
  number={3},
  pages={035004},
  year={2024},
  publisher={IOP Publishing}
}

@article{CT24,
  title={Remainder {T}erms of ${L}^p$-{H}ardy {I}nequalities with {M}agnetic {F}ields: {T}he {C}ase $1 < p < 2$},
  author={Chen, X. P. and Tang, C. L.},
  journal={J. Geom. Anal.},
  volume={35},
  number={384},
  year={2025},
  publisher={Springer}
}

@article{ruzhansky2018note,
  title={A note on stability of {H}ardy inequalities},
  author={Ruzhansky, M. and Suragan, D.},
  journal={Ann. Funct. Anal.},
  volume={9},
  number={4},
  pages={451--462},
  year={2018}
}

@article{ruzhansky2019critical,
  title={Critical {H}ardy inequalities},
  author={Ruzhansky, M. and Suragan, D.},
  journal={Ann. Fenn. Math.},
  volume={44},
  number={2},
  pages={1159--1174},
  year={2019}
}

@article{banerjee2026sharp,
  title={Sharp quantitative forms of the {H}ardy inequality on {C}artan--{H}adamard manifolds via {S}obolev--{L}orentz embeddings},
  author={Banerjee, A. and Ganguly, D. and Roychowdhury, P.},
  journal={arXiv preprint arXiv:2601.13750},
  year={2026}
}

@article{cianchi2008hardy,
  title={{H}ardy inequalities with non-standard remainder terms},
  author={Cianchi, A. and Ferone, A.},
  journal={Ann. Inst. H. Poincar{\'e} Anal. Non Lin{\'e}aire},
  volume={25},
  number={5},
  pages={889--906},
  year={2008},
  publisher={Elsevier}
}

@article{figalli2022sharp,
  title={Sharp gradient stability for the {S}obolev inequality},
  author={Figalli, A. and Zhang, Y. R. Y.},
  journal={Duke Math. J.},
  volume={171},
  number={12},
  pages={2407--2459},
  year={2022}
}

@article{bonforte2025stability,
  title={Stability in {G}agliardo--{N}irenberg--{S}obolev inequalities: flows, regularity and the entropy method},
  author={Bonforte, M. and Dolbeault, J. and Nazaret, B. and Simonov, N.},
  journal={Mem. Amer. Math. Soc.},
  volume={308},
  number={1554},
  year={2025},
  publisher={American Mathematical Society}
}

@article{brasco2012sharp,
  title={Sharp stability of some spectral inequalities},
  author={Brasco, L. and Pratelli, A.},
  journal={Geom. Funct. Anal.},
  volume={22},
  number={1},
  pages={107--135},
  year={2012},
  publisher={Springer}
}

@article{dolbeault2025sharp,
  title={Sharp stability for {S}obolev and log-{S}obolev inequalities, with optimal dimensional dependence},
  author={Dolbeault, J. and Esteban, M. J. and Figalli, A. and Frank, R. L. and Loss, M.},
  journal={Camb. J. Math.},
  volume={13},
  number={2},
  pages={359--430},
  year={2025},
  publisher={International Press}
}

@article{cianchi2009sharp,
  title={The sharp {S}obolev inequality in quantitative form},
  author={Cianchi, A. and Fusco, N. and Maggi, F. and Pratelli, A.},
  journal={J. Eur. Math. Soc. (JEMS)},
  volume={11},
  number={5},
  pages={1105--1139},
  year={2009},
  publisher={European Mathematical Society}
}

@article{bianchi1991note,
  title={A note on the {S}obolev inequality},
  author={Bianchi, G. and Egnell, H.},
  journal={J. Funct. Anal.},
  volume={100},
  number={1},
  pages={18--24},
  year={1991},
  publisher={Elsevier}
}

@article{barki2024sharp,
  title={Sharp weighted discrete $p$-{H}ardy inequality and stability},
  author={Barki, A.},
  journal={arXiv preprint arXiv:2501.00299},
  year={2024}
}

@article{copson1928note,
  title={Note on series of positive terms},
  author={Copson, E. T.},
  journal={J. Lond. Math. Soc.},
  volume={3},
  number={1},
  pages={49--51},
  year={1928}
}

@article{frank2008non,
  title={Non-linear ground state representations and sharp {H}ardy inequalities},
  author={Frank, R. L. and Seiringer, R.},
  journal={J. Funct. Anal.},
  volume={255},
  number={12},
  pages={3407--3430},
  year={2008},
  publisher={Elsevier}
}

@misc{landau1921letter,
  title={Letter to {G}.{H}. {H}ardy},
  author={Landau, E.},
  year={June 21, 1921},
}

@article{stampach2026hardy,
  title={The $p$-{H}ardy--{R}ellich--{B}irman inequalities on the half-line},
  author={{\v{S}}tampach, F. and Waclawek, J.},
  journal={arXiv preprint arXiv:2603.08864},
  year={2026}
}

@article{gupta2024discrete,
  title={Discrete functional inequalities on lattice graphs},
  author={Gupta, S.},
  journal={arXiv preprint arXiv:2403.10270},
  year={2024}
}

@article{gupta2024onedimensional,
  title={One-dimensional discrete {H}ardy and {R}ellich inequalities on integers},
  author={Gupta, S.},
  journal={J. Fourier Anal. Appl.},
  volume={30},
  number={2},
  pages={15},
  year={2024}
}

@article{ciaurri2018hardy,
  title={{H}ardy's inequality for the fractional powers of a discrete {L}aplacian},
  author={Ciaurri, {\'O}. and Roncal, L.},
  journal={J. Anal.},
  volume={26},
  number={2},
  pages={211--225},
  year={2018}
}

@article{gerhat2025improved,
  title={An improved discrete {R}ellich inequality on the half-line},
  author={Gerhat, B. and Krej{\v{c}}i{\v{r}}{\'\i}k, D. and {\v{S}}tampach, F.},
  journal={Israel J. Math.},
  volume={268},
  number={1},
  pages={45--72},
  year={2025}
}

@article{gerhat2025criticality,
  title={Criticality transition for positive powers of the discrete {L}aplacian on the half line},
  author={Gerhat, B. and Krej{\v{c}}i{\v{r}}{\'\i}k, D. and {\v{S}}tampach, F.},
  journal={Rev. Mat. Iberoam.},
  volume={41},
  number={3},
  pages={1173--1200},
  year={2025}
}

@article{birman1961spectrum,
  title={On the spectrum of singular boundary-value problems},
  author={Birman, M. S.},
  journal={Mat. Sb. (N.S.)},
  volume={55(97)},
  pages={125--174},
  year={1961}
}

@article{stampach2024optimal,
  title={Optimal discrete {H}ardy-{R}ellich-{B}irman inequalities},
  author={{\v{S}}tampach, F. and Waclawek, J.},
  journal={arXiv preprint arXiv:2405.07742},
  year={2024}
}

@article{huang2024onedimensional,
  title={One-dimensional sharp discrete {H}ardy-{R}ellich inequalities},
  author={Huang, X. and Ye, D.},
  journal={J. Lond. Math. Soc.},
  volume={109},
  number={1},
  pages={e12838},
  year={2024}
}

@article{adimurthi2006optimal,
  title={Optimal {H}ardy-{R}ellich inequalities, maximum principles and related eigenvalue problems},
  author={Adimurthi and Grossi, M. and Santra, S.},
  journal={J. Funct. Anal.},
  volume={240},
  number={1},
  pages={36--83},
  year={2006},
  publisher={Elsevier}
}

@article{sekar2006role,
  title={Role of the fundamental solution in {H}ardy-{S}obolev-type inequalities},
  author={Adimurthi and Sekar, A.},
  journal={Proc. Roy. Soc. Edinburgh Sect. A},
  volume={136},
  number={6},
  pages={1111--1130},
  year={2006},
}

@article{ghoussoub2011bessel,
  title={Bessel pairs and optimal {H}ardy and {H}ardy--{R}ellich inequalities},
  author={Ghoussoub, N. and Moradifam, A.},
  journal={Math. Ann.},
  volume={349},
  pages={1--57},
  year={2011},
  publisher={Springer}
}

@article{cazacu2021method,
  title={The method of super-solutions in {H}ardy and {R}ellich type inequalities in the ${L}^2$ setting: an overview of well-known results and short proofs},
  author={Cazacu, C.},
  journal={Rev. Roumaine Math. Pures Appl.},
  volume={66},
  number={3-4},
  pages={617-638},
  year={2021}
}

@book{ghoussoub2013functional,
  title={Functional {I}nequalities: New {P}erspectives and New Applications},
  author={Ghoussoub, N. and Moradifam, A.},
  volume={187},
  year={2013},
  publisher={Amer. Math. Soc.}
}

@article{gupta2022discrete,
  title={Discrete weighted {H}ardy inequality in 1-{D}},
  author={Gupta, S.},
  journal={J. Math. Anal. Appl.},
  volume={514},
  number={2},
  pages={126345},
  year={2022}
}

@article{krejcirik2022sharp,
  title={A sharp form of the discrete {H}ardy inequality and the {K}eller-{P}inchover-{P}ogorzelski inequality},
  author={Krej{\v{c}}i{\v{r}}{\'\i}k, D. and {\v{S}}tampach, F.},
  journal={Amer. Math. Monthly},
  volume={129},
  number={3},
  pages={281--283},
  year={2022}
}

@article{fischer2023improved,
  title={An improved discrete $p$-{H}ardy inequality},
  author={Fischer, F. and Keller, M. and Pogorzelski, F.},
  journal={Integral Equations Operator Theory},
  volume={95},
  number={4},
  pages={24},
  year={2023}
}

@article{keller2018improved,
  title={An improved discrete {H}ardy inequality},
  author={Keller, M. and Pinchover, Y. and Pogorzelski, F.},
  journal={Amer. Math. Monthly},
  volume={125},
  number={4},
  pages={347--350},
  year={2018}
}

@article{rozenblum2009spectral,
  title={On the spectral estimates for the {S}chr{\"o}dinger operator on $\mathbb{Z}^d$, $d \ge 3$},
  author={Rozenblum, G. and Solomyak, M.},
  journal={arXiv preprint arXiv:0905.0270},
  year={2009}
}

@article{rozenblum2014spectral,
  title={On spectral estimates for the {S}chr{\"o}dinger operators in global dimension 2},
  author={Rozenblum, G. and Solomyak, M.},
  journal={St. Petersburg Math. J.},
  volume={25},
  number={3},
  pages={495--505},
  year={2014}
}

@article{kapitanski2016continuous,
  title={On continuous and discrete {H}ardy inequalities},
  author={Kapitanski, L. and Laptev, A.},
  journal={J. Spectr. Theory},
  volume={6},
  number={4},
  pages={837--858},
  year={2016}
}

@article{keller2018optimal,
  title={Optimal {H}ardy inequalities for {S}chr{\"o}dinger operators on graphs},
  author={Keller, M. and Pinchover, Y. and Pogorzelski, F.},
  journal={Comm. Math. Phys.},
  volume={358},
  number={2},
  pages={767--790},
  year={2018}
}

@article{gupta2023hardy,
  title={{H}ardy and {R}ellich inequality on lattices},
  author={Gupta, S.},
  journal={Calc. Var. Partial Differential Equations},
  volume={62},
  pages={81},
  number={3},
  year={2023}
}

@book{mazya2013sobolev,
  title={{S}obolev spaces},
  author={Maz'ya, V.},
  volume={342},
  year={2013},
  publisher={Springer}
}

@book{balinsky2015analysis,
  title={The analysis and geometry of {H}ardy's inequality},
  author={Balinsky, A. A. and Evans, W. D. and Lewis, R. T.},
  volume={1},
  year={2015},
  publisher={Springer}
}

@book{ruzhansky2019hardy,
  title={{H}ardy inequalities on homogeneous groups: {100} years of {H}ardy inequalities},
  author={Ruzhansky, M. and Suragan, D.},
  series={Progr. Math.},
  volume={327},
  year={2019},
  publisher={Birkh{\"a}user Cham}
}

@article{persson2024hardy,
  title={On {H}ardy-type inequalities as an intellectual adventure for 100 years},
  author={Persson, L. E. and Samko, N.},
  journal={J. Math. Sci.},
  volume={280},
  number={2},
  pages={180--197},
  year={2024},
  publisher={Springer}
}

@book{kufner2007hardy,
  title={The {H}ardy inequality: {A}bout its history and some related results},
  author={Kufner, A. and Maligranda, L. and Persson, L. E.},
  year={2007},
  publisher={Vydavatelsk{\'y} servis}
}

\end{document}